\renewcommand\phi{\varphi}
\renewcommand\epsilon{\varepsilon}
\renewcommand\theta{\vartheta}
\newcommand\mbb{\mathbb}
\newcommand\mcal{\mathscr}
\newcommand\ul{\underline}
\newcommand{\V}{{\mathcal V}}
\def\TH{\ensuremath{\textup{TH}}}
\newcommand\sA{\mcal{A}}
\newcommand\sC{\mcal{C}}
\newcommand\sL{\mcal{L}}
\newcommand\sS{\mcal{S}}
\newcommand\N{\mbb{N}}
\newcommand\R{\mbb{R}}
 \newcommand{\la}[0]{\lambda}
 \newcommand{\si}[0]{\sigma}
\DeclareMathOperator\Tr{Tr}
\DeclareMathOperator\pr{pr}
\DeclareMathOperator\QM{\rm QM}
\DeclareMathOperator\Sym{Sym}
\DeclareMathOperator\conv{conv}
\DeclareMathOperator\M{M}
\DeclareMathOperator\cc{cc}
\DeclareMathOperator\id{id}
\DeclareMathOperator\interior{\rm int}
\numberwithin{equation}{section}
\theoremstyle{plain}
\newtheorem{Thm}[equation]{Theorem}
\newtheorem{Prop}[equation]{Proposition}
\newtheorem{Cor}[equation]{Corollary}
\newtheorem{Lemma}[equation]{Lemma}
\newtheorem*{Thm*}{Theorem}
\newtheorem*{Satz*}{Satz}
\newtheorem*{Prop*}{Proposition}
\newtheorem*{Cor*}{Corollary}
\newtheorem*{Lemma*}{Lemma}
\newtheorem*{Hilfssatz*}{Lemma}
\newtheorem*{Sublemma*}{Sublemma}
\newtheorem*{Conjecture*}{Conjecture}
\theoremstyle{definition}
\newtheorem{Def}[equation]{Definition}
\newtheorem{Ex}[equation]{Example}
\newtheorem{Question}[equation]{Question}
\newtheorem{Rem}[equation]{Remark}
\newtheorem*{Def*}{Definition}
\newtheorem*{Defs*}{Definitions}
\newtheorem*{Ex*}{Example}
\newtheorem*{Exs*}{Examples}
\newtheorem*{LemmaDef*}{Lemma and Definition}
\newtheorem*{Notation*}{Notation}
\newtheorem*{Problem*}{Problem}
\newtheorem*{Question*}{Question}
\newtheorem*{Rem*}{Remark}
\newtheorem*{Rems*}{Remarks}
\newtheorem*{Warning*}{Warning}
\begin{document}
\title{Positive Polynomials and Projections of Spectrahedra}
\author{Jo\~{a}o Gouveia }
\address{Department of Mathematics, University of Washington, Seattle, USA, and CMUC, Department of Mathematics, University of 
Coimbra, Portugal}
\email{jgouveia@math.washington.edu}
\thanks{The first author was partially supported by the NSF Focused 
   Research Group grants DMS-0757371 and DMS-0757207 and by Funda{\c c}{\~ a}o para a Ci{\^ e}ncia e Tecnologia.}
 
\date{\today}
\author{Tim Netzer}
\address{Falkult\"at f\"ur Mathematik und Informatik, Universit{\"a}t Leipzig, Germany}
\email{tim.netzer@math.uni-leipzig.de}
\subjclass[2000]{Primary 13J30, 14P10, 90C22; Secondary 11E25, 15A48, 52A27}

\keywords{}
\begin{abstract} This work is concerned with different aspects of spectrahedra and their projections, sets that are important in semidefinite optimization. We prove results on the limitations of so called Lasserre and theta body relaxation methods for semialgebraic sets and varieties.  As a special case we obtain the main result of  \cite{NePlSch} on non-exposed faces. We also solve the open problems from that work. We further prove some helpful facts which can not be found in the existing literature, for example that the closure of a projection of a spectrahedron is again such a projection. We give a unified account of several results on convex hulls of curves and images of polynomial maps. We finally prove a Positivstellensatz for projections of spectrahedra, which exceeds the known results that only work for basic closed semialgebraic sets.
\end{abstract}

\maketitle
\section{Introduction} \textit{Semidefinite programming} has turned out to be a very important and valuable tool in polynomial optimization in recent times. It is concerned with finding optimal values of linear functions on certain convex sets. These sets, called \textit{spectrahedra}, arise as linear sections of the cone of positive semidefinite matrices. Semidefinite programming generalizes linear programming. The importance of semidefinite programming comes from two facts. On one hand there exist efficient algorithms to solve semidefinite programming problems, see for example Ben-Tal and Nemirovski \cite{MR1857264}, Nesterov and Nemirovski \cite{MR1258086}, Nemirovski \cite{MR2334199}, Vandenberghe and Boyd \cite{MR1379041} and Wolkowicz, Saigal and Vandenberghe \cite{MR1778223}.
On the other hand, a  great amount of problems from various branches of mathematics can be approached using semidefinite programming. Examples come from combinatorial optimization, non-convex optimization and control theory; see for example Parrilo and Sturmfels \cite{MR1995016}, Gouveia, Parrilo and Thomas \cite{GPT} and all of the above mentioned literature.

This brings up the theoretical question of how to characterize sets on which semidefinite programming can be performed, i.e. to characterize spectrahedra. Helton and Vinnikov \cite{MR2292953} have done groundbreaking work towards this question. They show that spectrahedra are what they call \textit{rigidly convex}, and this condition is sufficient in dimension two. This result also solves the Lax conjecture, as explained in Lewis, Parrilo and Ramana \cite{MR2146191}. Whether each rigidly convex set of higher dimension is a spectrahedron is still an open question. However, it was just recently shown by Br\"and\'en \cite{bran} that the higher dimensional Lax conjecture fails.

Observe that semidefinite programming can also be performed on projections of spectrahedra. One just has to optimize the objective function over a higher dimensional set. Up to now there are only two known necessary conditions for a set to be the projection of a spectrahedron: being convex and being semi-algebraic. Lasserre \cite{LasserreConvSets} has provided a method to prove for certain sets that they are the projection of a spectrahedron.  Helton and Nie \cite{HeltonNieSDPrepr,HeltonNieNecSuffSDP} have applied the method to large classes of convex sets. They indeed conjecture that each convex semi-algebraic set is the projection of a spectrahedron. 

This work is concerned with the question of how to write sets as projections of spectrahedra. Our contribution is the following. After introducing notation we review in Section 3 some of the methods to construct projections of spectrahedra. We give  a unified account of some results spread across  the literature, for example on convex hulls of curves, and we prove some helpful facts for which we could not find a reference. For example we show that the closure of the projection of a spectrahedron is again such a projection.

In Section 4 we analyze the Lasserre method, and the related theta body method from \cite{GPT}. We prove results on the limitations of these approaches. As a special case we obtain the main result from \cite{NePlSch}.  We also settle the open questions from that work by providing a series of examples.

Finally we proof a Positivstellensatz for projections of spectrahedra in Section 5. This is interesting in particular because such sets are usually not basic closed semialgebraic. So none from   the large amount of known Positivstellens\"atze apply to such sets.

\section{Notation}\label{not}

We will use the following notation. For $n\in\N$ let $\ul X =(X_1,\ldots,X_n)$ be an $n$-tuple of variables. Let $\R[\ul X]$ denote the real polynomial ring in these variables. By $\R[\ul X]_d$ we denote its finite dimensional subspace of polynomials of degree at most $d$. Let $\ul p=(p_1,\ldots,p_r)$ be an $r$-tuple of polynomials from $\R[\ul X]$. Then $$\sS(\ul p):=\{x\in\R^n\mid p_1(x)\geq 0, \ldots, p_r(x)\geq 0\}\subseteq\R^n$$ is the \textit{basic closed semi-algebraic set} defined by $\ul p$. In the polynomial ring we have a corresponding \textit{quadratic module}, defined as $$\QM(\ul p):=\left\{ \si_0 + \si_1p_1+\cdots +\si_rp_r\mid \si_i\in\sum \R[\ul X]^2\right\}.$$ Here we use the notation $\sum V^2$ for the set of all sums of squares of elements from a given subset $V$ of a commutative ring $R$. 

All elements from $\QM(\ul p)$ are nonnegative as functions on $\sS(\ul p)$.
There are also certain truncated parts of $\QM(\ul p)$, defined as $$\QM(\ul p)_d:=\left\{ \si_0+\si_1p_1+\cdots+\si_rp_r\mid \si_i\in\sum(\R[\ul X]_d)^2\right\}.$$ $\QM(\ul p)_d$ is contained in the finite dimensional space $\R[\ul X]_{2d+\nu}$, where $\nu$ is the maximum over the degrees of $p_1,\ldots,p_r$. Note however that $\QM(\ul p)_d$ will be strictly smaller than $\QM(\ul p)\cap\R[\ul X]_{2d+\nu}$ in general.

We denote by $\M_{k\times k}(V)$ the set of $k\times k$-matrices with entries from a given subset $V$ of a commutative ring $R$. $\sum M_{k\times k}(V)^2$ is then the set of \textit{sums of hermitian squares}, i.e. it contains the finite sums of elements of the form $A^tA$ with $A\in\M_{k\times k}(V)$. We denote by $\Sym_k(V)$ the set of symmetric matrices from $\M_{k\times k}(V)$. The usual inner product $A\circ B$ for $k\times k$-matrices $A=(a_{ij})_{i,j}$ and $B=(b_{ij})_{i,j}$ is defined as $$A\circ B=\Tr(AB)=\sum_{i,j}a_{ij}b_{ij},$$ where $\Tr$ denotes the trace. For a matrix $A\in\Sym_k(\R),$ $A\succeq 0$ means that $A$ is positive semidefinite, i.e. $v^tAv\geq 0$ holds for every $v\in\R^k$. $A\succ 0$ means that $A$ is positive definite, i.e. $v^tAv>0$ holds for all $v\neq 0$.

A $k$-dimensional \textit{linear matrix polynomial} is an affine linear polynomial $$\sA(\ul X)= A + X_1 B_1+\cdots X_n B_n,$$ with $A,B_1,\ldots,B_n\in\Sym_k(\R).$ It is called \textit{strictly feasible} if there is a point $x\in\R^n$ with $\sA(x)\succ 0$.
 The set $$\sS(\sA):=\{x\in\R^n\mid \sA(x)\succeq 0\}$$ is called a \textit{spectrahedron}. It is a convex and basic closed semi-algebraic set, and a generalization of a polyhedron. This paper deals with projections of such spectrahedra, i.e. sets of the form $$S=\{x\in\R^n\mid \exists y\in\R^m\ \sA(x,y)\succeq 0\},$$ where $\sA$ is a linear matrix polynomial in the variables $X_1,\ldots,X_n,Y_1,\ldots,Y_m$. So $S$ is the image of the spectrahedron $\widetilde{S}\subseteq\R^{n+m}$ defined by $\sA$, under the canonical projection $\R^{n+m}\rightarrow\R^n$.
 
 For a convex set $S\subseteq\R^n$ let $\mathrm{Aff}(S)$ denote its affine hull, i.e. the smallest affine subspace of $\R^n$ containing $S$. A \textit{face} of $S$ is a nonempty convex subset $F\subseteq S$ which is extremal in the following sense: whenever $\la x+(1-\la)y\in F$ for some $x,y\in S,\la\in(0,1)$, then $x,y\in F$. For an affine linear polynomial $\ell\in\R[\ul X]_1$ that is nonnegative on $S$, the subset $\{x\in S\mid \ell(x)=0\}$ is a face or empty. A face if called \textit{exposed} if it is of such a form.

\section{Some construction methods revisited}

As indicated in the introduction, there is a large amount of works on  the construction of spectrahedra that project to a given set. In this section we review some of them. We also provide 
proofs of some helpful facts that can not be found in the existing literature. 
\subsection{Polars and Closures}\label{first}
 We start by reviewing a result on polars by Nemirovski, and we deduce some helpful corollaries. We for example observe that the closure of the projection of a spectrahedron is again such a projection. The results on polars will also be very helpful in the subsequent section, when considering Lasserre relaxations.

In \cite{MR2334199}, Section 4.1.1, Nemirovski proves the following result: 

\begin{Prop} \label{nem}Let $\sA(\ul X,\ul Y)= A + X_1B_1 +\cdots + X_nB_n + Y_1C_1+ \cdots + Y_mC_m$ be a $k$-dimensional strictly feasible linear matrix polynomial. Let $S:=\{x\in\R^n\mid \exists y\in\R^m\ \sA(x,y)\succeq 0\}$ be the projection of the spectrahedron defined by $\sA,$ and let $$S^{\circ}:=\{\ell\in\R[\ul X]_1 \mid \ell\geq 0 \mbox{ on } S\}$$ denote the convex cone of affine linear polynomials nonnegative on  $S$. Then \begin{align*}S^{\circ}=\{ l_0+l_1X_1+\cdots+l_nX_n \mid \exists U\in\Sym_k(\R): \quad & U\succeq 0,\  U\circ A\leq l_0, \\ & U\circ B_i=l_i \mbox{ for } i=1,\ldots, n, \\ & U\circ C_j=0 \mbox{ for } j=1,\ldots, m \}.\end{align*} In particular, $S^{\circ}$ is again the projection of a spectrahedron.
\end{Prop}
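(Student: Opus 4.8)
The plan is to establish the asserted description of $S^{\circ}$ by a direct application of semidefinite duality, and then to read off the last sentence from the description itself. The key point is that for an affine linear polynomial $\ell = l_0 + l_1 X_1 + \cdots + l_n X_n$, the condition $\ell \ge 0$ on $S$ is, by definition of $S$, equivalent to the implication
\[
\sA(x,y) \succeq 0 \ \Longrightarrow\ l_0 + \textstyle\sum_i l_i x_i \ge 0,
\]
for all $(x,y) \in \R^{n+m}$; that is, the linear function $(x,y) \mapsto l_0 + \sum_i l_i x_i$ is nonnegative on the spectrahedron $\widetilde S$ defined by $\sA$.

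First I would reformulate this as a semidefinite feasibility/infeasibility statement. Nonnegativity of the affine functional $\ell$ on $\widetilde S$ means the semidefinite system ``$\sA(x,y) \succeq 0$ and $l_0 + \sum_i l_i x_i = -\epsilon$'' is infeasible for every $\epsilon > 0$; by the strict feasibility hypothesis on $\sA$ (Slater's condition), the semidefinite alternative theorem applies with no duality gap, and the infeasibility is equivalent to the existence of a dual certificate. Concretely, I would invoke the conic duality theorem: under Slater's condition, $\ell$ is nonnegative on $\widetilde S$ if and only if $\ell$ lies in the cone generated by the ``constraints'' $A + \sum_i X_i B_i + \sum_j Y_j C_j \succeq 0$ tested against positive semidefinite matrices $U$, i.e.\ there exists $U \succeq 0$ with
\[
U \circ B_i = l_i \ (i=1,\ldots,n), \qquad U \circ C_j = 0 \ (j=1,\ldots,m), \qquad U \circ A \le l_0.
\]
The vanishing conditions $U \circ B_i = l_i$ and $U \circ C_j = 0$ come from matching the coefficients of $X_i$ and $Y_j$ (these are equality constraints, since $X_i, Y_j$ range over all of $\R$), while the inequality $U \circ A \le l_0$ is what remains after absorbing a slack: $\ell(x,y) = U \circ \sA(x,y) + (l_0 - U\circ A) \ge 0$ on $\widetilde S$ because $U \circ \sA(x,y) \ge 0$ there and $l_0 - U \circ A \ge 0$. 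This proves the $\supseteq$ inclusion directly and without any hypothesis; the $\subseteq$ inclusion is the nontrivial direction requiring strict feasibility.

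For the final sentence: the set on the right-hand side is visibly the projection onto the $(l_0, \ldots, l_n)$-coordinates of the set of pairs $\big((l_0,\ldots,l_n), U\big)$ with $U \in \Sym_k(\R)$ subject to the constraint that the block-diagonal matrix $\diag\big(U,\ l_0 - U\circ A\big)$ is positive semidefinite together with the affine-linear equalities $U \circ B_i = l_i$, $U \circ C_j = 0$. Since this is a spectrahedron in the variables $(l_0, \ldots, l_n, U)$ (the block-diagonal positive semidefiniteness is a single linear matrix inequality, and the equalities cut out an affine subspace which can itself be written as a pair of opposite linear matrix inequalities, or simply absorbed), $S^{\circ}$ is its image under a linear projection, hence the projection of a spectrahedron.

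The main obstacle is the careful invocation of semidefinite conic duality with the correct handling of Slater's condition: one must ensure that strict feasibility of $\sA$ guarantees attainment and zero duality gap in the relevant dual program, so that nonnegativity of $\ell$ on $\widetilde S$ actually forces existence of the certifying matrix $U$ rather than merely an approximate or limiting one. This is precisely where the hypothesis that $\sA$ is strictly feasible is used; without it the description of $S^{\circ}$ can fail (only the closure would be captured). Everything else — the coefficient-matching, the block-diagonal rewriting, and recognizing the resulting set as a projected spectrahedron — is routine bookkeeping.
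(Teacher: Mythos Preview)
Your proposal is correct and follows essentially the same route as the paper: the paper does not give a detailed argument but attributes the result to Nemirovski and remarks that it ``follows from the duality theory of conic programming, and is thus essentially a separation argument,'' which is precisely what you carry out via Slater's condition and the conic alternative. Your explicit verification of the easy inclusion and your block-diagonal rewriting for the final sentence fill in details the paper leaves implicit, but the underlying idea is identical.
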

The result follows from the duality theory of conic programming, and is thus essentially a separation argument. The set  $S^{\circ}$ is called the \textit{polar} of $S$ in Nemirovksi's work.  

In a first step we want to get rid of the technical assumption \textit{strictly feasible} in Proposition \ref{nem}.

\begin{Lemma}\label{inter} Let $S\subseteq\R^n$ be the projection of a spectrahedron $\widetilde{T}\subseteq\R^{n+l}$ and assume $\interior(S)\neq \emptyset.$ Then $S$ is the projection of a spectrahedron $\widetilde{S}\subseteq\R^{n+m}$ with $\interior(\widetilde{S})\neq \emptyset$ and $m\leq l$. \end{Lemma}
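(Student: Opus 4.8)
The plan is to replace $\widetilde{T}$ by its image under a carefully chosen affine coordinate change on its affine hull. Fix a $k$-dimensional linear matrix polynomial $\sB$ in the $n+l$ variables $X_1,\dots,X_n,Z_1,\dots,Z_l$ with $\widetilde{T}=\sS(\sB)$, and let $\pi\colon\R^{n+l}\to\R^n$ be the projection onto the first $n$ coordinates, so that $S=\pi(\widetilde{T})$. Put $V:=\mathrm{Aff}(\widetilde{T})$. Since $S\subseteq\pi(V)$, since $\pi(V)$ is an affine subspace of $\R^n$, and since $\interior(S)\neq\emptyset$, we get $\pi(V)=\R^n$. In particular $\dim V\ge n$, and as $V\subseteq\R^{n+l}$ we may write $\dim V=n+m$ with $0\le m\le l$.

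Next I would construct an affine isomorphism $\phi\colon\R^{n+m}\iso V$ satisfying $\pi\circ\phi=\pi'$, where $\pi'\colon\R^{n+m}\to\R^n$ is the projection onto the first $n$ coordinates. Let $\vec V$ denote the linear part of $V$; then $\pi$ maps $\vec V$ onto $\R^n$, so one can pick $\tilde e_1,\dots,\tilde e_n\in\vec V$ with $\pi(\tilde e_i)$ the $i$-th standard unit vector, together with a basis $w_1,\dots,w_m$ of $W:=\vec V\cap\ker\pi$, a subspace which a dimension count shows has dimension $m$ and satisfies $\vec V=\spa(\tilde e_1,\dots,\tilde e_n)\oplus W$. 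Since $0\in\R^n=\pi(V)$, choose $v_0\in V$ with $\pi(v_0)=0$, and set
\[
\phi(t_1,\dots,t_n,s_1,\dots,s_m):=v_0+\sum_{i=1}^n t_i\,\tilde e_i+\sum_{j=1}^m s_j\,w_j.
\]
Then $\phi$ is an affine isomorphism onto $V$, and $\pi\circ\phi=\pi'$ by construction.

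Now put $\widetilde{S}:=\phi^{-1}(\widetilde{T})\subseteq\R^{n+m}$. Since $\phi$ is affine, $\sB\circ\phi$ is again a $k$-dimensional linear matrix polynomial, now in $n+m$ variables; and since $\phi$ is a bijection onto $V\supseteq\widetilde{T}$ while $\widetilde{T}=\{z\in\R^{n+l}\mid\sB(z)\succeq0\}$, we obtain $\widetilde{S}=\{u\in\R^{n+m}\mid\sB(\phi(u))\succeq0\}=\sS(\sB\circ\phi)$, so $\widetilde{S}$ is a spectrahedron. Its projection is $\pi'(\widetilde{S})=\pi(\phi(\phi^{-1}(\widetilde{T})))=\pi(\widetilde{T})=S$. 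Finally, being a nonempty convex set, $\widetilde{T}$ has nonempty relative interior inside its affine hull $V$; transporting this back through the affine isomorphism $\phi^{-1}$ gives $\interior(\widetilde{S})\neq\emptyset$, and together with $m\le l$ this is exactly the assertion.

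The one step requiring care is the construction of $\phi$. Everything hinges on $\pi$ restricting to a \emph{surjection} $V\to\R^n$ — which is the only place the hypothesis $\interior(S)\neq\emptyset$ enters — since this is what allows $V$ to be split as a graph over $\R^n$ with $m$-dimensional fibres, so that $X_1,\dots,X_n$ survive as honest coordinates and only $m\le l$ genuinely new variables are introduced rather than all $l$ of the $Z_j$.
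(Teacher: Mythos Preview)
Your proof is correct and follows essentially the same strategy as the paper: both arguments identify the affine hull $V=\mathrm{Aff}(\widetilde{T})$, use $\interior(S)\neq\emptyset$ to see that $\pi|_V$ surjects onto $\R^n$, and then choose coordinates on $V$ in which the first $n$ are exactly $X_1,\dots,X_n$, so that $\widetilde{T}$ becomes a full-dimensional spectrahedron in $\R^{n+m}$ with $m=\dim V-n\le l$. The only cosmetic difference is that the paper first translates so $0\in\widetilde{T}$ and builds a linear automorphism $L$ of the ambient $\R^{n+l}$ sending $V$ to $\R^{n+m}\times\{0\}$, whereas you work intrinsically by parametrizing $V$ via an affine isomorphism $\phi\colon\R^{n+m}\to V$ with $\pi\circ\phi=\pi'$; these are inverse viewpoints on the same coordinate change.
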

\begin{proof} Let $\widetilde{T}\subseteq\R^{n+l}=\R^n\times\R^l$ be a spectrahedron that projects to $S$. Let $e_i^{(n)}$ denote the $i$-th standard basis vector of $\R^n$. Without loss of generality we assume $0, e_1^{(n)},\ldots, e_n^{(n)}\in S$ (which uses $\interior(S)\neq\emptyset$). So we have $b_i:=(e_i^{(n)},u_i)\in\widetilde{T}$ for some $u_i\in\R^l$ ($i=1,\ldots,n)$. We can also assume $0\in \widetilde{T}$, so if $V$ denotes the affine hull of $\widetilde{T}$, then $V$ is a subspace of $\R^n\times\R^l$. Note that $\widetilde{T}$ has nonempty interior in $V$. We extend $b_1,\ldots, b_n$ to a basis of $V$, by adding some $c_1,\ldots, c_m$. Then we extend these vectors to a basis of $\R^n\times\R^l$ by adding vectors $d_1,\ldots, d_t$ (so $m+t=l$).  We can thereby choose all $c_i,d_i\in\{0\}^n\times\R^l$.  

Now let $L\colon \R^n\times\R^l\rightarrow\R^n\times\R^l$ be the linear automorphism sending $b_i$ to $e_i^{(n+l)},$ $c_i$ to $e_{n+i}^{(n+l)}$ and $d_i$ to $e_{n+m+i}^{(n+l)}$.

Note that for all $x\in\R^n$ and $u\in\R^l$ there is some $\widetilde{u}\in\R^l$ with $L(x,u)=(x,\widetilde{u})$. We further have $L(V)=\R^n\times\R^m\times\{0\}^t$. Now \begin{align*} S=&\left\{ x\in\R^n\mid \exists u\in\R^l\ (x,u)\in \widetilde{T}\right\} \\ =& \left\{ x\in\R^n\mid \exists u\in\R^l \ L(x,u)\in L(\widetilde{T})\right\} \\ =& \left\{ x\in\R^n\mid \exists v\in \R^m \ (x,v,0)\in L(\widetilde{T})\right\}. \end{align*} Since $L(\widetilde{T})$ is clearly also a spectrahedron, and considering it as a spectrahedron $\widetilde{S}$ in $\R^n\times\R^m$, we have proven the result.
\end{proof}

Note that for a spectrahedron, having nonempty interior is equivalent to being definable by a strictly feasible linear matrix polynomial, by Ramana and Goldman \cite{MR1342934}, Corollary 5.
So we get:

\begin{Prop} \label{pol}Let $S\subseteq\R^n$ be the projection of a spectrahedron. Then $S^{\circ}=\{\ell\in\R[\ul X|_1\mid \ell\geq 0 \mbox{ on } S\}$ is again the projection of a spectrahedron.
\end{Prop}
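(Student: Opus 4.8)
The plan is to reduce Proposition~\ref{pol} to the strictly feasible case of Proposition~\ref{nem} by handling separately the two ways in which $S$ can fail to have the required structure: $S$ might have empty interior, and even when it has nonempty interior the given linear matrix polynomial need not be strictly feasible. First I would dispose of a trivial case: if $S=\emptyset$, then $S^\circ=\R[\ul X]_1$, which is certainly the projection of a spectrahedron (indeed all of an affine space). So assume $S\neq\emptyset$.

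The core of the argument treats the case $\interior(S)\neq\emptyset$. Here I would invoke Lemma~\ref{inter} to replace the given representation of $S$ by one as the projection of a spectrahedron $\widetilde S\subseteq\R^{n+m}$ with $\interior(\widetilde S)\neq\emptyset$; by the cited result of Ramana and Goldman, $\widetilde S$ is then definable by a strictly feasible linear matrix polynomial $\sA(\ul X,\ul Y)$. Proposition~\ref{nem} now applies verbatim and exhibits $S^\circ$ as the projection (onto the $\ell$-coordinates) of the set of pairs $(\ell,U)$ with $U\succeq 0$ and the stated linear equalities and the inequality $U\circ A\le l_0$; since all these conditions are linear matrix inequalities in $(\ell,U)$ (the scalar inequality $U\circ A\le l_0$ being a $1\times 1$ LMI, and $U\succeq 0$ an LMI, combined block-diagonally), this displays $S^\circ$ as the projection of a spectrahedron.

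It remains to remove the hypothesis $\interior(S)\neq\emptyset$, i.e.\ to handle a nonempty $S$ contained in a proper affine subspace $V=\mathrm{Aff}(S)$ of $\R^n$. The idea is to pass to the relative interior: $S$ has nonempty interior when viewed inside $V$. After an affine change of coordinates I may assume $V=\R^d\times\{0\}^{n-d}$, so that $S$ corresponds to a set $S'\subseteq\R^d$ with $\interior(S')\neq\emptyset$; one checks that $S'$ is again a projection of a spectrahedron (intersect the original spectrahedron with the linear equations cutting out $V$, which is still a spectrahedron, and project). By the previous paragraph $(S')^\circ\subseteq\R[X_1,\dots,X_d]_1$ is the projection of a spectrahedron. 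Finally I would relate $S^\circ$ to $(S')^\circ$: a linear form $\ell=l_0+\sum_{i=1}^n l_iX_i$ is nonnegative on $S$ iff its restriction $l_0+\sum_{i=1}^d l_iX_i$ is nonnegative on $S'$, while $l_{d+1},\dots,l_n$ are completely free. Hence $S^\circ\cong (S')^\circ\times\R^{\,n-d}$ as convex cones, and a product of a projection of a spectrahedron with an affine space is again the projection of a spectrahedron. Transporting back through the affine coordinate change (which induces a linear isomorphism on the spaces of affine forms, carrying spectrahedra to spectrahedra) finishes the proof.

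The step I expect to require the most care is the bookkeeping in the last paragraph: making precise the identification of the ambient space $\R[\ul X]_1$ of affine forms under the affine coordinate change, and checking that restriction of affine forms to the subspace $V$ induces exactly the splitting $S^\circ\cong(S')^\circ\times\R^{n-d}$, including that the ``free'' coordinates really are unconstrained. None of this is deep, but it is the part where one must be attentive to the distinction between a convex set and its affine hull, and to the fact that $\interior$ in Lemma~\ref{inter} and Ramana--Goldman is interior in the ambient $\R^n$, not relative interior, which is precisely why the reduction to $S'\subseteq\R^d$ is needed.
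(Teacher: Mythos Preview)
Your proposal is correct and follows essentially the same route as the paper: reduce to the strictly feasible case via Lemma~\ref{inter} and Ramana--Goldman when $\interior(S)\neq\emptyset$, and otherwise pass to the affine hull $\R^t\times\{0\}^{n-t}$ and express $S^\circ$ as the preimage of $(S')^\circ$ under the restriction map $\ell\mapsto\ell(X_1,\dots,X_t,0,\dots,0)$, which is exactly your product decomposition $(S')^\circ\times\R^{n-t}$. Your added remarks (the trivial case $S=\emptyset$, and the observation that $S'$ is again a projection of a spectrahedron) are small elaborations the paper leaves implicit.
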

\begin{proof} First assume that $S$ has nonempty interior in $\R^n$. Then $S$ is the projection of a spectrahedron defined by as strictly feasible linear matrix polynomial, by Lemma \ref{inter} and Corollary 5 in \cite{MR1342934}. So the result follows from Theorem \ref{nem} in this case.

If $S$ has empty interior, assume without loss of generality that its affine hull is $\R^t\times\{0\}^{n-t}.$ Then $S$ has nonempty interior considered as a set in $\R^t$. If $S^{\circ}_t$ denotes the polar of $S$ in $\R[X_1,\ldots, X_t]$, then $$S^{\circ}=\{\ell\in\R[\ul X]_1\mid \ell(X_1,\ldots,X_t,0,\ldots,0)\in S^{\circ}_t \},$$ which proves the result.
\end{proof}

\begin{Cor} \label{clos} Let $S\subseteq\R^n$ be the projection of a spectrahedron. Then its closure $\overline{S}$ is again the projection of a spectrahedron.
\end{Cor}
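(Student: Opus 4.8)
The plan is to realise $\overline S$ as a twofold polar and to apply Proposition~\ref{pol} twice. Recall the elementary fact that, for a convex set $S\subseteq\R^n$, the closure $\overline S$ equals the intersection of all closed half-spaces containing $S$; as these half-spaces are precisely the sets $\{\ell\ge 0\}$ with $\ell\in S^{\circ}$, this says
$$\overline S=\{x\in\R^n\mid \ell(x)\ge 0\text{ for all }\ell\in S^{\circ}\}.$$
(The set $S$ is convex, being a projection of a spectrahedron; and for $S=\emptyset$ the identity still reads $\overline S=\emptyset$, since $-1\in S^{\circ}$.) It thus suffices to prove that the right-hand side is a projection of a spectrahedron, using that $S^{\circ}$ is one by Proposition~\ref{pol}.

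View $S^{\circ}$ as a subset of $\R^{n+1}$ through the coefficient vector $(l_0,\dots,l_n)$ of $\ell=l_0+l_1X_1+\cdots+l_nX_n$, and apply Proposition~\ref{pol} once more: the polar $(S^{\circ})^{\circ}\subseteq\R^{n+2}$ is again a projection of a spectrahedron. A point of $(S^{\circ})^{\circ}$ is the coefficient vector $(c_0,c_1,\dots,c_{n+1})$ of an affine form $c_0+c_1L_0+c_2L_1+\cdots+c_{n+1}L_n$ in variables $L_0,\dots,L_n$ standing for the coordinates of $\R^{n+1}\supseteq S^{\circ}$, and it lies in $(S^{\circ})^{\circ}$ exactly when $c_0+c_1l_0+c_2l_1+\cdots+c_{n+1}l_n\ge 0$ for every $(l_0,\dots,l_n)\in S^{\circ}$. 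Setting $c_0=0$ and $c_1=1$ turns this condition into $\ell(c_2,\dots,c_{n+1})\ge 0$ for every $\ell\in S^{\circ}$, that is, into $(c_2,\dots,c_{n+1})\in\overline S$ by the displayed identity. Hence
$$\overline S=\{(c_2,\dots,c_{n+1})\in\R^n\mid (0,1,c_2,\dots,c_{n+1})\in(S^{\circ})^{\circ}\}.$$
Writing $(S^{\circ})^{\circ}$ as the image of the spectrahedron $\{\sC\succeq 0\}\subseteq\R^{n+2}\times\R^N$ under the coordinate projection to $\R^{n+2}$, and substituting $c_0=0$, $c_1=1$ into $\sC$, we obtain a linear matrix polynomial in the $n+N$ variables $c_2,\dots,c_{n+1}$ and the auxiliary ones whose associated projected spectrahedron is exactly $\overline S$. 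So $\overline S$ is a projection of a spectrahedron.

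The substance of the proof lies entirely in Proposition~\ref{pol} (hence in Nemirovski's Proposition~\ref{nem} and the normalisation of Lemma~\ref{inter}); granted that, the corollary only costs the bipolar identity and a substitution. Consequently the one place requiring care is the bookkeeping of the homogenising coordinates $c_0$ and $c_1$ --- checking that the evaluation-at-$x$ functional on $S^{\circ}$ corresponds to the affine slice $\{c_0=0,\ c_1=1\}$ of $(S^{\circ})^{\circ}$ and not to some other one --- and I expect that to be the only thing a first attempt might get wrong; there is no analytic or geometric obstacle beyond it.
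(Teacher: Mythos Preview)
Your argument is correct and is essentially the same as the paper's: apply Proposition~\ref{pol} twice to see that $(S^\circ)^\circ$ is a projection of a spectrahedron, then realise $\overline S$ as the affine slice of $(S^\circ)^\circ$ obtained by fixing the constant coefficient to $0$ and the coefficient of $L_0$ to $1$. The paper compresses this into the single line $\overline S=\{x\in\R^n\mid X_0+x_1X_1+\cdots+x_nX_n\in(S^\circ)^\circ\}$, which encodes exactly your substitution $c_0=0$, $c_1=1$; your expanded bookkeeping is accurate.
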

\begin{proof} By Corollary \ref{pol}, $(S^{\circ})^{\circ}$ is the projection of a spectrahedron. But we have $$\overline{S}=\{x\in\R^n\mid X_0 +x_1X_1+\cdots + x_nX_n\in (S^{\circ})^{\circ} \}, $$ which proves the result. 
\end{proof}

We can also use Proposition \ref{nem} for an alternative characterization of projections of spectrahedra:

\begin{Cor} For a closed convex set $S\subseteq \R^n$, the following are equivalent:
\begin{itemize}
\item[(i)] $S$ is the projection of a spectrahedron.
\item[(ii)] $S$ is the inverse image under an affine linear map of the dual of a spectrahedral cone.
\end{itemize}
\end{Cor}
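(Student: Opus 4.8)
I would prove the two implications separately, starting with (ii)$\Rightarrow$(i), which is short. Let $\sK=\{y\in\R^N\mid y_1A_1+\cdots+y_NA_N\succeq0\}$ be a spectrahedral cone; being a spectrahedron, Proposition~\ref{pol} shows that the cone $\sK^\circ$ of affine linear polynomials nonnegative on $\sK$ is the projection of a spectrahedron. Since $\sK$ is a closed convex cone containing $0$, an affine linear polynomial is nonnegative on $\sK$ precisely when its constant term is nonnegative and its linear part lies in the dual cone $\sK^\ast$; hence $\sK^\circ=\R_{\ge 0}\times\sK^\ast$, and projecting away the first coordinate exhibits $\sK^\ast$ itself as the projection of a spectrahedron. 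If $S=L^{-1}(\sK^\ast)$ for an affine linear map $L\colon\R^n\to\R^N$, substituting $L$ into a linear matrix polynomial whose projection is $\sK^\ast$ realizes $S$ as the projection of a spectrahedron; it is then automatically closed and convex.

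For (i)$\Rightarrow$(ii) I would first bring $S$ into a convenient form, exactly as in the proof of Proposition~\ref{pol}: assuming $\interior(S)\neq\emptyset$, Lemma~\ref{inter} together with Corollary~5 of \cite{MR1342934} lets us write $S=\{x\mid\exists y\ \sA(x,y)\succeq0\}$ with $\sA(\ul X,\ul Y)=A+X_1B_1+\cdots+X_nB_n+Y_1C_1+\cdots+Y_mC_m$ strictly feasible, so Proposition~\ref{nem} describes $S^\circ$ explicitly. Now feed the polynomial $X_0+x_1X_1+\cdots+x_nX_n$ into the identity $\overline S=\{x\mid X_0+x_1X_1+\cdots+x_nX_n\in(S^\circ)^\circ\}$ from the proof of Corollary~\ref{clos} and unwind it: this polynomial is nonnegative on $S^\circ$ iff $l_0+x_1l_1+\cdots+x_nl_n\ge0$ for every $(l_0,\dots,l_n)\in S^\circ$, and since in the description of $S^\circ$ each admissible $U\succeq0$ (i.e.\ with $C_1\circ U=\cdots=C_m\circ U=0$) contributes exactly the tuples with $l_i=B_i\circ U$ and $l_0\in[A\circ U,\infty)$, the minimum over this ray equals $(A+\sum_i x_iB_i)\circ U$. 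So the condition is $(A+\sum_i x_iB_i)\circ U\ge0$ for every $U\in\Sym_k(\R)$ with $U\succeq0$ and $C_1\circ U=\cdots=C_m\circ U=0$; that is, $A+\sum_i x_iB_i\in\sK^\ast$, where $\sK:=\{U\in\Sym_k(\R)\mid U\succeq0,\ C_1\circ U=\cdots=C_m\circ U=0\}$ is a linear section of the semidefinite cone and hence a spectrahedral cone. Thus $\overline S=\sA_0^{-1}(\sK^\ast)$ for the affine linear map $\sA_0\colon\R^n\to\Sym_k(\R)$, $x\mapsto A+\sum_i x_iB_i$; and since $S$ is closed, $S=\overline S=\sA_0^{-1}(\sK^\ast)$, which is statement~(ii).

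The remaining case $\interior(S)=\emptyset$ is handled as in the proof of Proposition~\ref{pol}: one may assume $\mathrm{Aff}(S)=\R^t\times\{0\}^{n-t}$, so the trace $S_t$ of $S$ in $\R^t$ is full-dimensional, closed and the projection of a spectrahedron; by the previous paragraph $\overline{S_t}$ is the preimage, under an affine linear map $\R^t\to\Sym_{k'}(\R)$, of the dual $(\sK')^\ast$ of a spectrahedral cone $\sK'$. Combining this with the equations $X_{t+1}=\cdots=X_n=0$---i.e.\ passing to the block map $x\mapsto(\sA'(x_1,\dots,x_t),\,x_{t+1},\dots,x_n)$ into $\Sym_{k'}(\R)\oplus\R^{n-t}$ and the cone $(\sK')^\ast\oplus\{0\}^{n-t}=(\sK'\oplus\R^{n-t})^\ast$---realizes $S=\overline S$ as the preimage of the dual of a spectrahedral cone.

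The one step requiring care is matching the closures in (i)$\Rightarrow$(ii). One sees at once that $S=\{x\mid A+\sum_i x_iB_i\in\sP+\mathrm{span}_\R(C_1,\dots,C_m)\}$, with $\sP$ the cone of positive semidefinite $k\times k$ matrices; but this Minkowski sum need not be closed, so it is not clear a priori that the preimage of its closure is $\overline S$ and not something strictly larger. Routing the argument through the polar $S^\circ$ and Proposition~\ref{nem}---where the double-polar formula of Corollary~\ref{clos} automatically takes the closure into account---is precisely what forces the two closures to coincide. (Indeed, $\overline{\sP+\mathrm{span}_\R(C_j)}=\sK^\ast$ by conic duality, consistently with the formula obtained above.)
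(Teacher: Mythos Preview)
Your proof is correct and follows essentially the same route as the paper: for (ii)$\Rightarrow$(i) both arguments reduce to Proposition~\ref{pol}, and for (i)$\Rightarrow$(ii) both use Lemma~\ref{inter} to obtain a strictly feasible description, apply Proposition~\ref{nem} to identify $S^\circ$, and then recognize the closed convex $S$ as $\{x\mid U\circ\sA(x,0)\ge0\text{ for all }U\in\sK\}=\sA_0^{-1}(\sK^\ast)$ for the spectrahedral cone $\sK=\{U\succeq0\mid U\circ C_j=0\}$, with the empty-interior case handled by restricting to $\mathrm{Aff}(S)$ and padding the cone by a copy of $\R^{n-t}$. Your final paragraph about the Minkowski sum $\sP+\mathrm{span}(C_j)$ not being closed is a nice piece of commentary that the paper does not spell out, but it is not needed for the argument itself.
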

\begin{proof} "(ii)$\Rightarrow$(i)" follows from Proposition \ref{pol}. For "(i)$\Rightarrow$(ii)" first assume that $S$ has nonempty interior. Then it is the projection of a spectrahedron defined by a strictly feasible $k$-dimensional linear matrix polynomial $$\sA(\ul X,\ul Y)=A+X_1B_1+\cdots+X_nB_n+Y_1C_1+\cdots+Y_mC_m.$$ Since $S$ is closed we find by Proposition \ref{nem}  \begin{align*}S=\{x\in\R^n\mid U\circ\sA(x,0)\geq 0 & \mbox{ for all } U\in\Sym_k(\R) \mbox{ with } \\ & U\succeq 0 \mbox{ and }  U\circ C_j=0 \mbox{ for all } j\}.\end{align*} So if  $\sC$ denotes the spectrahedral cone of positive semidefinite matrices $U$ fulfilling the linear equations $U\circ C_j=0$ for all $j$, then $S$ is the inverse image of the dual of $\sC$ under the affine linear map $x\mapsto \sA(x,0)$. 

Now assume without loss of generality that $S\subseteq\R^t\times\{0\}^{n-t}$ has nonempty interior in $\R^t$. Then there is some affine linear map $L\colon\R^t\rightarrow\R^s$ and a spectrahedral cone $\sC\subseteq\R^s$ such that $S=L^{-1}(\sC^{\vee})$. Here, $\sC^\vee$ denotes the dual cone of $\sC$ in $\R^s$. Then for the spectrahedral cone $\widetilde{\sC}:= \sC \times\R^{n-t}$ and affine linear map $$\widetilde{L}\colon\R^n\rightarrow\R^s\times\R^{n-t};\  (x,y)\mapsto(L(x),y)$$ one as $\widetilde{L}^{-1}(\widetilde{\sC}^{\vee})=S$.
\end{proof}

\subsection{Lasserre Relaxations}\label{second}

In this subsection we review the method of Lasserre \cite{LasserreConvSets} to construct projections of spectrahedra, and use Proposition \ref{pol} to give an alternative explanation of the method.

 We first observe that  if $M\subseteq\R[\ul X]_1$ is the projection of a spectrahedron, then $$\sL:=\{x\in\R^n\mid \ell(x)\geq 0 \mbox{ for all } \ell \in M\}$$ is also such a projection. This follows from Proposition \ref{pol}, since $\sL$ is $M^{\circ}$ intersected with a subspace.  Now for a finite set of polynomials $p_1,\ldots,p_r\in\R[\ul X]$ let $S=\sS(\ul p)$ be the basic closed semi-algebraic set they define, $\QM(\ul p)$ denote the corresponding quadratic module in $\R[\ul X]$ and $\QM(\ul p)_d$ its truncated part, as defined in Section \ref{not}. It turns out that each $\QM(\ul p)_d$ is the projection of a spectrahedron. One can for example use the following result, which is Theorem 1 from Ramana and Goldman  \cite{Ramana95quadraticmaps}:

\begin{Thm} \label{rago}Let $f\colon\R^n\rightarrow \R^m$ be a quadratic polynomial map. Then the convex hull of the image $f(\R^n)$ is the projection of a spectrahedron.
\end{Thm}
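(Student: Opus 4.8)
The plan is to realize the convex hull of $f(\R^n)$ as a linear projection of the cone of positive semidefinite matrices, using the classical ``moment matrix'' lift that linearizes quadratic expressions. Write $f=(f_1,\dots,f_m)$ with each $f_i$ quadratic, say $f_i(x)=x^tQ_ix+q_i^tx+c_i$. The key observation is that, in the enlarged matrix variable
\[
Z(x)=\begin{pmatrix}1 & x^t\\ x & xx^t\end{pmatrix}\in\Sym_{n+1}(\R),
\]
every $f_i(x)$ becomes a \emph{linear} function of $Z(x)$: namely $f_i(x)=\widehat Q_i\circ Z(x)$ for an appropriate symmetric $(n+1)\times(n+1)$ matrix $\widehat Q_i$ built from $Q_i,q_i,c_i$. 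The matrices $Z(x)$ are exactly the rank-one positive semidefinite $(n+1)\times(n+1)$ matrices with top-left entry $1$; relaxing the rank condition gives the spectrahedron
\[
\widetilde S:=\Bigl\{(y,Z)\in\R^m\times\Sym_{n+1}(\R)\ \Big|\ Z\succeq 0,\ Z_{00}=1,\ y_i=\widehat Q_i\circ Z\ (i=1,\dots,m)\Bigr\}.
\]
I claim its projection to $\R^m$ is precisely $\conv(f(\R^n))$.

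First I would check the easy inclusion: for each $x$, the point $(f(x),Z(x))$ lies in $\widetilde S$ since $Z(x)\succeq 0$, $Z(x)_{00}=1$, and the linearization identity holds; hence $f(\R^n)$, and therefore its convex hull, lies in the projection (the projection of a spectrahedron is convex). For the reverse inclusion, take $(y,Z)\in\widetilde S$. Since $Z\succeq 0$ with $Z_{00}=1$, I can write $Z$ as a convex combination $Z=\sum_j\la_j v_jv_j^t$ of rank-one PSD matrices, and because the $(0,0)$-entry of each $v_jv_j^t$ contributes nonnegatively and they sum to $1$, after rescaling I may assume each $v_j$ has first coordinate $1$, i.e. $v_j=(1,x_j^t)^t$ for some $x_j\in\R^n$; so $Z=\sum_j\la_j Z(x_j)$. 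Applying the linear map $Z\mapsto(\widehat Q_i\circ Z)_i$ gives $y=\sum_j\la_j f(x_j)\in\conv(f(\R^n))$. This shows the projection equals $\conv(f(\R^n))$, and by Lemma~\ref{inter}-type considerations (or simply because it is manifestly the image of a spectrahedron under a coordinate projection) it is the projection of a spectrahedron.

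The only genuinely delicate point is the spectral decomposition step: one must argue that a PSD matrix with a prescribed nonzero diagonal entry in the first slot decomposes into rank-one pieces \emph{each of which} can be normalized to have first coordinate $1$. This is where a summand $v_jv_j^t$ with vanishing first coordinate would cause trouble — but such a summand has $(0,0)$-entry zero, and since all summands have nonnegative $(0,0)$-entry summing to $Z_{00}=1>0$, one can group the ``degenerate'' directions into a PSD matrix supported on $\{0\}^1\times\R^n$ and absorb it by perturbing the non-degenerate directions; alternatively, and more cleanly, one first writes $Z=Z(x_0)+N$ with $N\succeq 0$, $N_{00}=0$ (so $N$ is supported on the last $n$ coordinates), and then uses that $Z(x_0)+N$ lies in the convex hull of $\{Z(x_0+tu):t\in\R\}$ for suitable directions $u$ exactly because $Z(x_0\pm tu)$ averages to $Z(x_0)+t^2uu^t$. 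Carrying out this normalization carefully is the main obstacle; everything else is the routine linear-algebra bookkeeping of the identity $f_i(x)=\widehat Q_i\circ Z(x)$.
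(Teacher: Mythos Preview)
The paper does not give its own proof of this theorem; it is quoted as Theorem~1 from Ramana and Goldman \cite{Ramana95quadraticmaps} and used as a black box. So there is nothing in the paper to compare your argument against line by line.

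That said, your proposal is correct and is essentially the standard moment-matrix argument, which is also the approach in the original reference. Two small comments. First, your initial phrasing ``write $Z$ as a convex combination $\sum_j\lambda_j v_jv_j^t$'' is loose: a spectral decomposition is not a priori a convex combination, and the issue of summands with vanishing first coordinate is genuine, as you note. The clean fix you give at the end---the Schur complement decomposition $Z=Z(x_0)+N$ with $N\succeq 0$ supported on the last $n$ coordinates, followed by the averaging identity $\tfrac12\bigl(Z(x_0+tu)+Z(x_0-tu)\bigr)=Z(x_0)+t^2\begin{pmatrix}0&0\\0&uu^t\end{pmatrix}$---is the right way to finish, and it would be better to lead with that rather than the vaguer spectral-decomposition sketch. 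Second, to get a \emph{finite} convex combination you should make explicit that if $N=\begin{pmatrix}0&0\\0&M\end{pmatrix}$ with $M=\sum_{j=1}^r\mu_j u_ju_j^t$, then
\[
Z(x_0)+N=\frac{1}{r}\sum_{j=1}^r\Bigl(Z(x_0)+r\mu_j\begin{pmatrix}0&0\\0&u_ju_j^t\end{pmatrix}\Bigr),
\]
and each summand is an average of two $Z(x)$'s by your identity. With these clarifications the argument is complete.
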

So note that each $\QM(\ul p)_d$ is the convex hull of the image of a quadratic map. Indeed one just has to parametrize the coefficients occuring in the sums of squares used in the representations of its elements. Thus the sets $\QM(\ul p)_d\cap\R[\ul X]_1$ are projections of spectrahedra and
we finally obtain that each set $$\sL(\ul p)_d:=\{x\in\R^n\mid \ell(x)\geq 0 \mbox{ for all } \ell \in \QM(\ul p)_d\cap\R[\ul X]_1\}$$ is the projection of a spectrahedron. The set $\sL(\ul p)_d$ is called a degree $d$ Lasserre relaxation of $S$. Each $\sL(\ul p)_d$ is closed convex and contains $S$. The sequence of the $\sL(\ul p)_d$ is descending. 
 
 Note that our definition of a Lasserre relaxation differs slightly from the original one given in  Lasserre  \cite{LasserreConvSets}. There, the dual of $\QM(\ul p)_d$ is projected to $\R^n,$ whereas we intersect $\QM(\ul p)_d$ with $\R[\ul X]_1$ and then pass to $\R^n$. However, the different definitions define the same relaxations \textit{up to closures}, at least if $S$ has a nonempty interior. This can for example be checked with an argument as in the proof of Proposition 3.1 in Netzer, Plaumann and Schweighofer \cite{NePlSch}, using the closedness of $\QM(\ul p)_d$. 
 
 The following Theorem is the key result on Lasserre relaxations. Part (i) is mainly Theorem 2 from Lasserre \cite{LasserreConvSets}, and now clear from our above considerations. Part (ii) is proven as Proposition 3.1 (2) in Netzer, Plaumann and Schweighofer \cite{NePlSch}.

\begin{Thm}\label{lasmain}(i) If  $\QM(\ul p)_d$ contains all affine linear polynomials nonnegative on $S$, then $\sL(\ul p)_d=\overline{\conv(S)}$. In particular, $\overline{\conv(S)}$ is the projection of a spectrahedron then. 

(ii) If $\sL_d(\ul p)=\overline{\conv(S)}$ and $S$ has nonempty interior, then $\QM(\ul p)_d$ contains all affine linear polynomials nonnegative on $S$.
 \end{Thm}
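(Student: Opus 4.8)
The plan is to treat the two parts separately: part~(i) should fall out of the observations made just before the theorem, while part~(ii) --- which is Proposition~3.1(2) of \cite{NePlSch} --- requires a genuine separation argument.

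For part~(i), write $S^{\circ}=\{\ell\in\R[\ul X]_1\mid \ell\geq 0\text{ on }S\}$ with $S=\sS(\ul p)$. Every element of $\QM(\ul p)_d$ lies in $\QM(\ul p)$ and hence is nonnegative on $S$, so $\QM(\ul p)_d\cap\R[\ul X]_1\subseteq S^{\circ}$ holds unconditionally; the hypothesis of~(i) gives the reverse inclusion, since all elements of $S^{\circ}$ have degree at most~$1$. Thus $\QM(\ul p)_d\cap\R[\ul X]_1=S^{\circ}$, so $\sL(\ul p)_d=\{x\in\R^n\mid \ell(x)\geq 0\text{ for all }\ell\in S^{\circ}\}$, and this set is exactly $\overline{\conv(S)}$: the inclusion ``$\supseteq$'' is clear because an affine linear polynomial nonnegative on $S$ is nonnegative on $\overline{\conv(S)}$, and ``$\subseteq$'' follows by strictly separating a point outside $\overline{\conv(S)}$ from it by an affine linear $\ell$, which then lies in $S^{\circ}$. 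The ``in particular'' is then immediate, as $\sL(\ul p)_d$ was already shown to be the projection of a spectrahedron.

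For part~(ii), I would argue by contradiction. First note that $C:=\QM(\ul p)_d\cap\R[\ul X]_1$ is a \emph{closed} convex cone in the finite-dimensional space $\R[\ul X]_1$: it is a convex cone by construction, and it is closed because $\QM(\ul p)_d$ is closed in a finite-dimensional polynomial space and $\R[\ul X]_1$ is a linear subspace. Suppose some $\ell_0\in S^{\circ}$ is not in $\QM(\ul p)_d$; since $\deg\ell_0\leq 1$ this gives $\ell_0\notin C$, so there is a linear functional $\ph$ on $\R[\ul X]_1$ with $\ph\geq 0$ on $C$ and $\ph(\ell_0)<0$. Identify $\ph$ with a pair $(t,x)\in\R\times\R^n$ via $\ph(a_0+\sum_i a_iX_i)=ta_0+\sum_i x_ia_i$; since $1=1^2\in C$ we get $t=\ph(1)\geq 0$. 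If $t>0$, rescale so that $t=1$; then $\ph(\ell)=\ell(x)$ for every $\ell$, hence $x\in\sL(\ul p)_d=\overline{\conv(S)}$, yet $\ell_0(x)=\ph(\ell_0)<0$, contradicting $\ell_0\geq 0$ on $\overline{\conv(S)}$. If $t=0$, then $x\neq 0$ and $\ph(\ell)$ depends only on the linear part of $\ell$; from the identity $\ell(y+\la x)=\ell(y)+\la\,\ph(\ell)$ one sees that $x$ is a recession direction of $\sL(\ul p)_d=\overline{\conv(S)}$. Picking any $s\in S$ (here $\interior(S)\neq\emptyset$ is used, at least to ensure $S\neq\emptyset$) we get $s+\la x\in\overline{\conv(S)}$ for all $\la\geq 0$, whence $0\leq\ell_0(s+\la x)=\ell_0(s)+\la\,\ph(\ell_0)\to-\infty$ as $\la\to\infty$, a contradiction. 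Therefore $S^{\circ}\subseteq\QM(\ul p)_d$.

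I expect the $t=0$ branch of part~(ii) to be the crux: one has to rule out a separating functional that ``lives at infinity'', which is precisely where the geometry of $\overline{\conv(S)}$ (and, in the formulation of \cite{NePlSch}, the nonempty-interior hypothesis) enters. The closedness of $\QM(\ul p)_d$ is equally essential, since it is what makes the separation step in the argument legitimate.
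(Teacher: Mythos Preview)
Your argument is correct and follows the same route the paper indicates: part~(i) is exactly the ``clear from the above considerations'' step, and for part~(ii) the paper simply cites Proposition~3.1(2) of \cite{NePlSch}, whose proof proceeds, as yours does, by separating $\ell_0$ from the closed cone $\QM(\ul p)_d\cap\R[\ul X]_1$ and reading the separating functional as a point (or recession direction) of $\sL(\ul p)_d$.

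One point deserves sharpening. You assert that $\QM(\ul p)_d$ is closed as if this were automatic, and you locate the role of $\interior(S)\neq\emptyset$ only in ensuring $S\neq\emptyset$ for the $t=0$ branch. In fact the closedness of $\QM(\ul p)_d$ is \emph{not} automatic, and it is precisely here that the nonempty-interior hypothesis does its real work: a point $x_0\in\interior(S)$ with all $p_i(x_0)>0$ forces any relation $\sigma_0+\sum_i\sigma_ip_i=0$ with sums of squares $\sigma_i$ to be trivial, and from this the closedness of the truncated module follows by a standard closed-cone argument. So your final paragraph has the dependencies slightly inverted: once closedness is in hand the $t=0$ branch is routine, and closedness is where $\interior(S)\neq\emptyset$ truly enters.
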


Another possibility for obtaining semidefinite descriptions for convex sets is a different Lasserre-type relaxation hierarchy for 
convex hulls of algebraic sets, the {\it theta body} hierarchy introduced in Gouveia, Parrilo and Thomas \cite{GPT}.
Given an ideal $I\subseteq\R[\ul X]$, we denote the set of all polynomials $p$ such that $p-\si \in I$ for some sum of squares $\si$ with $\deg(\si) \leq 2d$ by $\Sigma(d,I)$. Note that $\Sigma(d,I)$ intersected with a finite dimensional subspace of $\R[\ul X]$ is the projection of a spectrahedron. This follows since $I\cap W$ is a subspace in $W$, for each subspace $W$ of $\R[\ul X]$.

\begin{Def}
Let $I\subseteq \R[\ul X]$ be an ideal. The $d$-th theta body of $I$, 
denoted by $\TH(I)_d$, is the intersection of all half-spaces $H_{\ell}:=\{x\in\R^n\mid  \ell(x) \geq 0\},$
where $\ell$ ranges over all linear polynomials in $\Sigma(d,I)$. 
\end{Def}
 The theta body hierarchy for the ideal $I$ approximates the convex hull of its real variety
 $\V_{\R}(I)=\{x\in\R^n\mid g(x)=0 \mbox{ for all } g \in I\}$.  
An analogous result to Theorem \ref{lasmain} is  true, with the condition 
of the ideal $I$ being real radical replacing the condition of $S$
having nonempty interior.

\begin{Thm}\label{thm:thetamain}(i)  If  $\Sigma(d,I)$ contains all affine linear polynomials nonnegative on $\V_{\R}(I)$, then $\TH(I)_d=\overline{\conv(\V_{\R}(I))}$. In particular, $\overline{\conv(\V_{\R}(I))}$ is the projection of a spectrahedron then. 

(ii) If $\TH(I)_d=\overline{\conv(\V_{\R}(I))}$ and $I$ is real radical, then $\Sigma(d,I)$ contains all affine linear polynomials 
nonnegative on $\V_{\R}(I)$.
 \end{Thm}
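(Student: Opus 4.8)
The plan is to mimic the proof of Theorem~\ref{lasmain}, with the hypothesis ``$I$ real radical'' taking over the role played there by ``$S$ has nonempty interior''.

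For part (i) I would argue exactly as in the Lasserre case. One inclusion is automatic: every affine linear $\ell\in\Sigma(d,I)$ is of the form $\sigma+g$ with $\sigma$ a sum of squares and $g\in I$, hence equals $\sigma$ on $\V_{\R}(I)$ and is nonnegative there, so each half-space $H_\ell$ defining $\TH(I)_d$ contains $\overline{\conv(\V_{\R}(I))}$; thus $\TH(I)_d\supseteq\overline{\conv(\V_{\R}(I))}$. For the reverse inclusion one uses that a nonempty closed convex set is the intersection of the closed half-spaces containing it, and that every such half-space for $\overline{\conv(\V_{\R}(I))}$ is of the form $H_\ell$ with $\ell$ affine linear and nonnegative on $\V_{\R}(I)$; so the hypothesis of (i), that all such $\ell$ lie in $\Sigma(d,I)$, forces $\TH(I)_d\subseteq\overline{\conv(\V_{\R}(I))}$ (the case $\V_{\R}(I)=\emptyset$ being immediate). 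That $\TH(I)_d$ is then a projection of a spectrahedron follows just as for Lasserre relaxations: $\Sigma(d,I)\cap\R[\ul X]_1$ is a projection of a spectrahedron, hence so is the set of $x$ on which all of its elements are nonnegative, by Proposition~\ref{pol}.

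For part (ii), write $C:=\Sigma(d,I)\cap\R[\ul X]_1$ and let $D\subseteq\R[\ul X]_1$ be the cone of affine linear polynomials nonnegative on $\V_{\R}(I)$; by (i) we have $C\subseteq D$, and the goal is $C=D$. We may assume $\V_{\R}(I)\neq\emptyset$, since if $I$ is real radical with empty real variety then $I=\R[\ul X]$ by the real Nullstellensatz and (ii) is trivial. Identifying $\R[\ul X]_1\cong\R^{n+1}$ via coefficients, a routine polarity computation shows that the dual cone $C^\vee$ is the closed conical hull of $\{1\}\times\TH(I)_d$ and $D^\vee$ is the closed conical hull of $\{1\}\times\overline{\conv(\V_{\R}(I))}$: here one uses that $1\in C\subseteq D$, so both cones lie in $\{l_0\ge 0\}$, and the elementary fact that a closed convex cone contained in $\{l_0\ge 0\}$ with nonempty slice at $l_0=1$ is recovered from that slice together with its recession cone. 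Hence the hypothesis $\TH(I)_d=\overline{\conv(\V_{\R}(I))}$ gives $C^\vee=D^\vee$, so $\overline{C}=\overline{D}=D$, the last equality because $D$ is closed.

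The remaining step, and the main obstacle, is to show that $C=\Sigma(d,I)\cap\R[\ul X]_1$ is closed; this is the only place real radicality is used, and it genuinely is needed (for $I=(X_1^2)$ the set $C$ is not closed and (ii) fails). Here $C=(\Sigma_{n,2d}+I_{2d})\cap\R[\ul X]_1$, where $\Sigma_{n,2d}$ is the closed cone of sums of squares of degree $\le 2d$ and $I_{2d}=I\cap\R[\ul X]_{2d}$, so it suffices to see that the image of $\Sigma_{n,2d}$ in $\R[\ul X]_{2d}/I_{2d}$ is closed. Since $I$ is real radical, $I_{2d}$ is exactly the space of polynomials of degree $\le 2d$ vanishing on $\V_{\R}(I)$, so $\R[\ul X]_d/I_d$ injects into the functions on $\V_{\R}(I)$; fixing a basis $e_1,\dots,e_N$ of $\R[\ul X]_d/I_d$, the image in question equals $\{\sum_{i,j}G_{ij}\,e_ie_j : G\in\Sym_N(\R),\ G\succeq 0\}$, the image of the positive semidefinite cone under a linear map. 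That map's kernel meets the semidefinite cone only in $0$ --- writing a kernel element as $A^tA$, each row of $A$ yields an element of $\R[\ul X]_d/I_d$ vanishing on $\V_{\R}(I)$, hence zero, so $A=0$ --- and an image of the semidefinite cone under such a map is closed. This gives the closedness of $C$, hence $C=D$, which is (ii).
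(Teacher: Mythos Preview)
Your proof is correct. The paper itself gives no argument beyond calling (i) ``immediate from the definition'' and citing Lemma~2.7 of \cite{GPT} for (ii). Your treatment of (i) is exactly that immediate argument spelled out. For (ii) you supply a self-contained proof along the standard route: duality shows $\overline{C}=D$ from the hypothesis $\TH(I)_d=\overline{\conv(\V_{\R}(I))}$, and the real-radical assumption enters only to prove that $C=\Sigma(d,I)\cap\R[\ul X]_1$ is closed, via the observation that the linear map $G\mapsto\sum G_{ij}e_ie_j$ has kernel meeting the positive semidefinite cone only in $0$. This is presumably close to the argument in \cite{GPT}; in any case it is valid, and your remark that closedness fails for $I=(X_1^2)$ correctly pinpoints where real-radicality is genuinely needed.
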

Again, part (i) is immediate from the definition, while part (ii) is proven in Lemma 2.7 of \cite{GPT}. In Section \ref{ex} we will study possible obstructions to these two methods. In particular we reprove the main result of Netzer, Plaumann and Schweighofer \cite{NePlSch} and settle the open problems from that work. 

\subsection{Images of Polynomial Maps}\label{fourth}

In this subsection we want to give a unified account of several results on convex hulls of images under polynomial maps, including results by Lasserre, Parrilo, Ramana and Goldman, Henrion and Scheiderer. The results can all be deduced  from the following principle:

\begin{Prop}\label{trans} Let $S\subseteq\R^n$ be a set and $V\subseteq\R[\ul X]$ a finite dimensional linear subspace containing $1$. Assume the subset $P\subseteq V$ of all elements of $V$ that are nonnegative on $S$ is the projection of a spectrahedron. Then for any map $f=(f_1,\ldots, f_m)\colon\R^n\rightarrow\R^m$  with $f_i\in V$ for all $i,$  $$\overline{\conv(f(S))}\subseteq\R^m$$ is the projection of a spectrahedron.
\end{Prop}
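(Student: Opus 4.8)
The plan is to reduce the statement about $\overline{\conv(f(S))}$ to the hypothesis about $P$ by a standard duality/separation argument, passing through the polar. First I would describe the polar cone $\overline{\conv(f(S))}^{\circ}=\{\ell\in\R[\ul Y]_1\mid \ell\ge 0\text{ on }f(S)\}$, where $\ul Y=(Y_1,\ldots,Y_m)$. The key observation is that an affine linear polynomial $\ell=l_0+l_1Y_1+\cdots+l_mY_m$ is nonnegative on $f(S)$ if and only if the polynomial $\ell\circ f = l_0 + l_1 f_1+\cdots+l_mf_m$ is nonnegative on $S$; and since each $f_i\in V$ and $1\in V$, this polynomial lies in $V$. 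Thus $\ell\mapsto \ell\circ f$ is a linear map $\R[\ul Y]_1\to V$ and $\overline{\conv(f(S))}^{\circ}$ is exactly the preimage of $P$ under this linear map. By hypothesis $P$ is the projection of a spectrahedron, and the preimage of a projection of a spectrahedron under an affine (here linear) map is again the projection of a spectrahedron — one just adds the defining linear equations of the map to the list of constraints. Hence $\overline{\conv(f(S))}^{\circ}$ is the projection of a spectrahedron.

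Next I would apply Proposition~\ref{pol} (the polar of a projection of a spectrahedron is again such) to conclude that $\left(\overline{\conv(f(S))}^{\circ}\right)^{\circ}$ is the projection of a spectrahedron. Finally, since $\overline{\conv(f(S))}$ is a closed convex set (the closed convex hull of a subset of $\R^m$), the bipolar theorem gives $\left(\overline{\conv(f(S))}^{\circ}\right)^{\circ}=\overline{\conv(f(S))}$ — more precisely, in the formulation used in the proof of Corollary~\ref{clos}, one has $\overline{\conv(f(S))}=\{y\in\R^m\mid Y_0+y_1Y_1+\cdots+y_mY_m\in \left(\overline{\conv(f(S))}^{\circ}\right)^{\circ}\}$, so it is a slice of a projection of a spectrahedron by an affine subspace, hence again a projection of a spectrahedron. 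This completes the argument.

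A couple of small points need care. One must check that $P$ being the projection of a spectrahedron really does pass to the linear preimage inside $\R[\ul Y]_1$: if $P=\{v\in V\mid \exists z\ \sA(v,z)\succeq 0\}$ with $\sA$ linear in coordinates on $V$, and $T\colon\R[\ul Y]_1\to V$ is the linear map $\ell\mapsto\ell\circ f$, then $T^{-1}(P)=\{\ell\mid \exists z\ \sA(T\ell,z)\succeq 0\}$, which is visibly a projection of a spectrahedron since $\sA(T\ell,z)$ is still affine linear in the coordinates of $\ell$ and $z$. One should also note that the bipolar identity used here is the same elementary fact already invoked for Corollary~\ref{clos}, applied in the variables $\ul Y$ rather than $\ul X$, so no new input is needed.

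I do not expect a serious obstacle: the proof is essentially bookkeeping on top of Proposition~\ref{pol}. The only mildly delicate point is keeping straight the two layers of polarity and making sure the degenerate cases (e.g. $f(S)$ empty, or $\overline{\conv(f(S))}$ not full-dimensional) are covered — but these are handled exactly as in Proposition~\ref{pol} and Corollary~\ref{clos}, since those statements already allow projections of spectrahedra with empty interior, so nothing extra is required here.
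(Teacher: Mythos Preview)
Your proposal is correct and follows essentially the same route as the paper: both identify the cone $M$ of affine linear polynomials nonnegative on $f(S)$ as the preimage of $P$ under the linear map $\ell\mapsto \ell\circ f$, observe this is a projection of a spectrahedron, and then recover $\overline{\conv(f(S))}$ from $M$ via the polar/bipolar argument of Proposition~\ref{pol} and Corollary~\ref{clos}. The paper's proof is terser, simply citing ``the arguments from the last section'' for the final step, while you spell out the bipolar identity and the preimage verification explicitly.
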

\begin{proof} For any affine linear polynomial $\ell\in\R[Y_1,\ldots,Y_m]_1$ the  polynomial \linebreak $\ell(f_1,\ldots,f_m)$ belongs to $V$. Define $M:=\{\ell\in\R[Y_1,\ldots,Y_m]_1\mid \ell(f_1,\ldots,f_m)\in P\}.$ One immediately checks that $M$ is the projection of a spectrahedron (since $P$ is) and contains only polynomials that are nonnegative on $f(S)$. Conversely, if $\ell$ is affine linear and nonnegative on $f(S)$, then $\ell(f_1,\ldots,f_m)$ is in $P$. Thus $M$ is precisely the cone of affine linear polynomials nonnegative on $f(S)$, and by the arguments from the last section $$\overline{\conv(f(S))}=\{x\in\R^m\mid \ell(x)\geq 0 \mbox{ for all } \ell \in M\}$$ is the projection of a spectrahedron.
\end{proof}

\begin{Ex} Not very surprisingly, the Lasserre result can be recovered from Proposition \ref{trans}. Indeed if there is some $d$ such that $\QM(\ul p)_d$ contains all affine linear polynomials that are nonnegative on $S$, then apply Proposition \ref{trans} with $V=\R[\ul X]_1$ and $f=\id$. $P=V\cap\QM(\ul p)_d$ is the projection of a spectrahedron, as explained in the previous section.
\end{Ex}

\begin{Ex} We also get that the closure of $\conv(f(\R^n))$ is the projection of a spectrahedron, for any quadratic map $f\colon\R^n\rightarrow\R^m$ (which is of course also not a new result, in view of Theorem \ref{rago} and Theorem \ref{clos}). Use the well-known fact that every globally nonnegative quadratic polynomial is a sum of squares of affine linear polynomials, and apply Proposition \ref{trans} with $S=\R^n$ and $V=\R[\ul X]_2.$ Again recall that $P=\sum\R[\ul X]_1^2\subseteq V$ is the projection of a spectrahedron.
\end{Ex}
In the following result, case (i) for a full rational curve is proven in Henrion \cite{He}, Theorem 1. In the version it is stated here it has also been the topic of a talk of Parrilo at a workshop in Banff in 2006, but there seems to be no suitable reference. Case (ii) relies on results of Scheiderer, as also explained in \cite{genone}. 
\begin{Cor}\label{curve} Let $S\subseteq\R^n$ be either \begin{itemize} \item[(i)] a semi-algebraic subset of a rational curve, or \item[(ii)] a smooth curve of genus $1$ with at least one non-real point at infinity. \end{itemize} Then for any rational map $$f=\left(\frac{f_1}{g},\ldots, \frac{f_m}{g}\right)\colon\R^n\rightarrow\R^m$$ that is defined everywhere on $S,$  we find that  $$\overline{\conv(f(S))}$$ is the projection of a spectrahedron.
\end{Cor}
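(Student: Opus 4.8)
The plan is to reduce the whole assertion to one positivity input on the curve and then run the mechanism of Proposition~\ref{trans}. The input I would isolate is the following, call it $(\star)$: \emph{for $S$ as in (i) or (ii) and every $N$, the cone $P_N:=\{p\in\R[\ul X]_N\mid p\ge 0\text{ on }S\}$ is the projection of a spectrahedron.}

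Granting $(\star)$, the remaining steps are soft. Since $f$ is defined on $S$, the polynomial $g$ has no zero on $S$; after replacing $g$ by $g^2$ and each $f_i$ by $f_ig$ we may assume $g>0$ on $S$, and we put $N:=\max\{\deg g,\deg f_1,\dots,\deg f_m\}$. For an affine linear $\ell=\ell_0+\ell_1Y_1+\cdots+\ell_mY_m$, multiplying the inequality $\ell\ge 0$ through by $g>0$ shows that $\ell$ is nonnegative on $f(S)$ if and only if the polynomial $L(\ell):=\ell_0g+\ell_1f_1+\cdots+\ell_mf_m\in\R[\ul X]_N$ lies in $P_N$. Since $L\colon\R[Y_1,\dots,Y_m]_1\to\R[\ul X]_N$ is linear, $(\star)$ implies that $M:=L^{-1}(P_N)$ --- which is exactly the cone of affine linear polynomials nonnegative on $f(S)$ --- is the projection of a spectrahedron. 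By the opening observation of Section~\ref{second} (which rests on Proposition~\ref{pol}), the set $\overline{\conv(f(S))}=\{w\in\R^m\mid\ell(w)\ge 0\text{ for all }\ell\in M\}$ is then the projection of a spectrahedron. This is nothing but Proposition~\ref{trans}, carried out with $V$ taken to be the finite dimensional space of regular functions $\{p/g\mid p\in\R[\ul X]_N\}$ on $S$ instead of a space of polynomials; inspection of that proof shows it never uses that the elements of $V$ are polynomials.

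Everything thus comes down to $(\star)$, and this is where the geometric hypotheses are used and where I expect the only real difficulty to lie. In case (i) I would pass to a rational parametrization of the curve carrying $S$; this replaces $S$ by a semialgebraic subset of $\R$ and $f$ by a rational map $\A^1\dashrightarrow\A^m$ defined on it, up to omitting finitely many points of $S$ --- which is immaterial here, since adjoining finitely many points to a projection of a spectrahedron and then taking the closed convex hull again yields such a set. Clearing denominators in $\R[t]$ and replacing the resulting set by its closure (which changes nothing about which polynomials are nonnegative on it), one is left with the statement that for a basic closed set $\sS(\ul h)\subseteq\R$ and every $N$, the cone of polynomials in $\R[t]_N$ nonnegative on $\sS(\ul h)$ is the projection of a spectrahedron. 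This is the one--variable Positivstellensatz with degree bounds: there is a $D$, depending only on $N$ and $\ul h$, with that cone equal to $\QM(\ul h)_D\cap\R[t]_N$, which is the projection of a spectrahedron by Section~\ref{second}; for a full rational curve this reproves Henrion~\cite{He}. In case (ii), where $S=C(\R)$, one instead appeals to Scheiderer's results on sums of squares on curves of genus one (see also \cite{genone}): the non-real point at infinity forces every polynomial of degree $\le N$ nonnegative on $C(\R)$ to be congruent, modulo the vanishing ideal $I$ of $C$, to a sum of squares of degree bounded solely in terms of $N$; hence $P_N=\Sigma(D,I)\cap\R[\ul X]_N$ for a suitable $D$, which is the projection of a spectrahedron as observed before Theorem~\ref{thm:thetamain}. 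The crux is therefore the bounded--degree sums-of-squares certificate on the curve; the passage from it to the statement about $\overline{\conv(f(S))}$ is just the polar formalism of Sections~\ref{first}--\ref{second}.
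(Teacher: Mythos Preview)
Your argument is correct and parallels the paper's: reduce everything to a bounded-degree positivity certificate on the curve, then feed that into the polar machinery of Proposition~\ref{trans}. The ingredients for $(\star)$ are the same ones the paper invokes (Kuhlmann--Marshall--Schwartz in one variable for (i), Scheiderer's results \cite{MR1675230,genone} for (ii)).

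The one genuine difference is the reduction from rational $f$ to the positivity input. The paper handles the denominator by a homogenization trick: it replaces $f$ by the polynomial map $G(x)=(f_1(x),\dots,f_m(x),g(x))$, shows $\overline{\conv(G(S))}$ is a projection of a spectrahedron, and then recovers $\overline{\conv(f(S))}$ as the slice of $\overline{\cc(G(S))}$ at height $1$ --- which requires the result from \cite{NeSi} that conic hulls of projections of spectrahedra are again such, together with Corollary~\ref{clos}. Your route is more direct: you simply observe that multiplying through by the positive $g$ turns ``$\ell\ge 0$ on $f(S)$'' into ``$L(\ell)\in P_N$'', which amounts to running Proposition~\ref{trans} with $V=\{p/g\mid p\in\R[\ul X]_N\}$ and noting that its proof never uses that $V$ consists of polynomials. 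This bypasses the conic-hull step entirely and is a cleaner reduction. On the other hand, the paper's proof makes one point more explicit than yours in case (ii): it spells out why a bounded-degree representation \emph{modulo} the vanishing ideal yields a bounded-degree representation in the quadratic module $\QM(\ul p,\pm\ul g)$ in $\R[\ul X]$ (the ideal part can be rewritten with $h_j$ of controlled degree, since $I\cap W$ is finite-dimensional for each finite-dimensional $W$). You sidestep this by working with $\Sigma(D,I)$ directly, which is fine given the remark before Theorem~\ref{thm:thetamain}.
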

\begin{proof}
First check that we can reduce to the case $g=1,$ i.e. the case that $f$ is a polynomial map.  
Indeed for a general rational map $f$ we can take without loss of generality a denominator $g$ that is positive on $S$, and we can also prove the claim for the following map instead: $$F\colon S\rightarrow\R^{m+1};\ x \mapsto \left(\frac{f_1(x)}{g(x)},\ldots, \frac{f_m(x)}{g(x)},1\right).$$ Then define $$G\colon S\rightarrow \R^{m+1};\ x \mapsto g(x)\cdot F(x).$$ This map is polynomial and thus assume we already know that $\overline{\conv(G(S))}$ is the projection of a spectrahedron. By \cite{NeSi}, Proposition 2.1, the conic hull of the projection of a spectrahedron is again such a projection. So together with Corollary \ref{clos} we get that $\overline{\cc(G(S))}$ is the projection of a spectrahedron. But now one checks $$\overline{\cc(G(S))}\cap\left(\R^m\times\{1\}\right)=\overline{\conv(F(S))},$$ which finishes the reduction step.

Now we ensure the existence of some finitely generated quadratic module $\QM(\ul p)$ in $\R[\ul X]$ that contains all polynomials nonnegative on $S$, with a degree bound on the sums of squares depending only on the degree of the respective polynomial. Then we can apply Proposition \ref{trans} with an arbitrary finite dimensional space $V$ and a suitable $\QM(\ul p)_d$.

Now for (i) it is clearly enough to consider the case of a semialgebraic subset of a straight line, which is covered by Kuhlmann, Marshall and Schwartz \cite{MR2174483} Theorem 4.1 (see also the paper by Scheiderer \cite{MR1829790}).

For (ii) it is enough to ensure the existence of such bounded degree representations in a quadratic module modulo the vanishing ideal $(g_1,\ldots, g_k)$ of the respective curve. Indeed, if some polynomial $p$ has a representation $$p=\sum_i\si_ip_i + \sum_j h_jg_j$$ with sums of squares $\si_i$ and arbitrary polynomials $h_j$, and the degree of the $\si_i$ is bounded by $2d$, then one can find a similar representation with polynomials $\widetilde{h}_j$ of  a degree bounded by some number not depending on the specific choice of $p$. This follows from the fact that an ideal intersected with a finite dimensional subspace of $\R[\ul X]$ is a finite dimensional subspace, and one can choose a finite basis. So if $p-\sum_i\si_ip_i$ belongs to that space, it is an $\R$-linear combination of these finitely many basis elements. This yields  a representation with polynomials $\widetilde{h}_i$ as desired. Then the quadratic module $\QM(\ul p, \pm \ul g)$ has the  property that we claimed in the beginning, using the standard equality $$h_j=\left(\frac{h_j+1}{2}\right)^2-\left(\frac{h_j-1}{2}\right)^2$$ for any polynomial $h_j$.

Now for smooth genus one curves with a non-real point at infinity the pure existence of sums of squares representations is Scheiderer \cite{MR1675230}, Theorem 4.10 (a), whereas the degree bounds are explained in Scheiderer \cite{genone}.
\end{proof}

Note that this result applies in cases where the curvature results from Helton and Nie \cite{HeltonNieSDPrepr} \cite{HeltonNieNecSuffSDP} and Lasserre's direct approach from \cite{LasserreConvSets} do not apply:

\begin{Ex} The basic closed semi-algebraic set $S=\{(x,y)\in\R^2\mid 0\leq y\leq 1, -1\leq x, y^2-x^3\geq 0\}$ is bounded by segments of rational curves. So it is the projection of a spectrahedron. The results from Helton and Nie do not apply since $Y^2-X^3$ is neither strictly quasi-concave on $S$, nor sos-concave. Also it is singular at the origin. The standard Lasserre method does not apply since $S$ has a nonexposed face, see for example Theorem \ref{exp} below. One could also replace the part of the set on the left hand side of the $y$-axis by a half disk. The resulting set is then even not basic closed, and still the Theorem applies.
\end{Ex}

\begin{Ex} Let $S\subseteq\R^2$ be defined by the inequality $y^2\leq 1-x^4$. The boundary is a smooth genus one curve with a non-real point at infinity. Thus $S$ is the projection of a spectrahedron. Applying the polynomial map $(x,y)\mapsto (x^2,xy^2)$ sends this curve to the 
boundary of the convex set $y^2 \leq x - 2x^3 +x^5$ which has a singularity at the point $(1,0)$ as seen in Figure \ref{fig:rational_map}. 
Still Corollary \ref{curve} guarantees that this set is the projection of a spectrahedron.
\end{Ex}

\begin{figure}[ht]
\begin{center}
\hfill 
\includegraphics[scale=0.25]{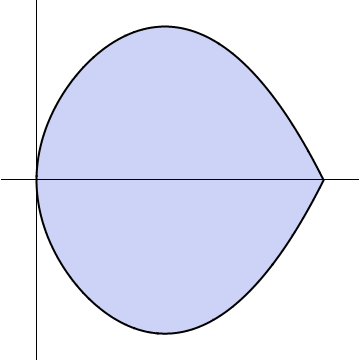}\hfill\
\caption{\small Convex Hull of the image of the curve $1-x^4-y^2=0$ by the map $(x,y)\mapsto (x^2,xy^2)$. } \label{fig:rational_map}
\end{center}
\end{figure}

We state some more corollaries of Proposition \ref{trans}. The following result is  Henrion \cite{He}, Theorem 1:

\begin{Cor}\label{quad} Let either $f\colon\R^3\rightarrow\R^m$ be homogeneous of degree $4$ or $f\colon\R^2\rightarrow\R^m$ of degree $4$ (but not necessarily homogeneous). Then the closure of the convex hull of the image of $f$ is the projection of a spectrahedron .
\end{Cor}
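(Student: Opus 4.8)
The plan is to derive both cases from Proposition~\ref{trans}, always taking the underlying set to be $S=\R^n$ (with $n=3$ in case (i) and $n=2$ in case (ii)). So the only thing to do is to exhibit a finite-dimensional subspace $V\subseteq\R[\ul X]$ containing $1$ with all coordinate functions $f_i$ of $f$ lying in $V$, and for which the cone $P$ of elements of $V$ nonnegative on $\R^n$ is the projection of a spectrahedron; then Proposition~\ref{trans} immediately gives that $\overline{\conv(f(\R^n))}$ is the projection of a spectrahedron. The whole point is to choose $V$ so that Hilbert's classical positivity results identify ``nonnegative on $\R^n$'' with ``sum of squares'' on $V$.

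Case (ii) is the simpler one. Here $f\colon\R^2\to\R^m$ has all coordinates $f_i\in\R[X_1,X_2]_4$, so take $V=\R[X_1,X_2]_4$, which contains $1$. By Hilbert's theorem, every polynomial of degree at most $4$ in two variables that is nonnegative on $\R^2$ is a sum of squares of polynomials of degree at most $2$ (homogenize to a ternary quartic form, which is a sum of squares of quadratic forms, and dehomogenize); conversely every element of $\sum\R[X_1,X_2]_2^2$ is nonnegative on $\R^2$ and has degree at most $4$. Hence $P=\sum\R[X_1,X_2]_2^2$, which is the projection of a spectrahedron by the discussion in Section~\ref{second} (it is the convex hull of the image of a quadratic map, cf.\ Theorem~\ref{rago}). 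Proposition~\ref{trans} now gives the claim.

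For case (i), $f\colon\R^3\to\R^m$ is homogeneous of degree $4$, so for every affine linear $\ell\in\R[Y_1,\ldots,Y_m]_1$ the composite $\ell(f_1,\ldots,f_m)$ is a constant plus a quartic \emph{form} in $X_1,X_2,X_3$. I would therefore \emph{not} take $V=\R[X_1,X_2,X_3]_4$ but the smaller space $V=\R\cdot 1\oplus H$, where $H$ is the space of homogeneous quartics in $X_1,X_2,X_3$; this still contains $1$ and all the $f_i$. An element $c+q$ of $V$ (with $c\in\R$, $q\in H$) is nonnegative on $\R^3$ if and only if $c\geq 0$ and $q\geq 0$ on $\R^3$: necessity of $c\geq 0$ comes from evaluating at $0$, and if $q(x)<0$ for some $x$ then $c+q(tx)=c+t^4q(x)\to-\infty$. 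By Hilbert's theorem on ternary quartics, every nonnegative quartic form in three variables is a sum of squares of quadratic forms; together with $c=(\sqrt c)^2$ this shows $P=\sum\R[X_1,X_2,X_3]_2^2\cap V$. Since $\sum\R[X_1,X_2,X_3]_2^2$ is the projection of a spectrahedron (again by Section~\ref{second} and Theorem~\ref{rago}) and $V$ is a linear subspace, $P$ is again the projection of a spectrahedron: intersecting a projection of a spectrahedron with a linear subspace only adds linear equality constraints to the defining linear matrix polynomial, exactly as used for the Lasserre sets in Section~\ref{second}. Proposition~\ref{trans} then finishes case (i).

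The main obstacle, and essentially the only place where care is needed, is precisely this choice of $V$ in case (i): if one used the full space $\R[X_1,X_2,X_3]_4$ of inhomogeneous quartics, the cone $P$ would be (after homogenization) the cone of nonnegative quaternary quartic forms, which by Hilbert is strictly larger than the corresponding sum-of-squares cone, so we would have no handle on it. It is the homogeneity of $f$ that keeps $\ell\circ f$ inside $\R\cdot 1\oplus H$, where Hilbert's positive result for ternary quartics applies. Everything else is routine bookkeeping with Proposition~\ref{trans} and the facts, already recorded in Section~\ref{second}, that bounded-degree sum-of-squares cones and their linear sections are projections of spectrahedra.
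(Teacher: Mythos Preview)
Your proof is correct and follows exactly the approach of the paper, which simply says ``apply Proposition~\ref{trans}, using Hilbert's result'' for both cases. You have merely expanded the one-sentence argument, in particular making explicit in case~(i) that the right choice of $V$ is $\R\cdot 1\oplus H$ rather than all of $\R[X_1,X_2,X_3]_4$---a point the paper leaves implicit but which is indeed what makes the appeal to Hilbert's ternary-quartic theorem legitimate.
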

\begin{proof}  We can apply Proposition \ref{trans}, using Hilbert's result that every globally nonnegative homogeneous degree $4$ polynomial in three variables and every globally nonnegative degree $4$ polynomial in two variables is a sum of squares.
\end{proof}

We get another result that has to our knowledge not been observed before:

\begin{Cor} Let $f\colon\R^4\rightarrow\R^m$ be homogeneous quadratic. Let $C\subseteq\R^4$ be any polyhedral cone. Then $\overline{\conv(f(C))}$ is the projection of a spectrahedron.
\end{Cor}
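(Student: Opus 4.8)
The plan is to apply Proposition \ref{trans} with the set $S=C$ and a cleverly chosen finite-dimensional subspace $V\subseteq\R[X_1,\ldots,X_4]$. Since $f$ is homogeneous quadratic, each component $f_i$ lies in $\R[\ul X]_2$, so the natural first guess would be $V=\R[\ul X]_2$. But a quadratic polynomial nonnegative on a polyhedral cone $C$ need not be a sum of squares, so the subset $P\subseteq V$ of polynomials nonnegative on $C$ need not obviously be the projection of a spectrahedron. The key point will be to enlarge $V$ using the defining inequalities of $C$: write $C=\sS(\ell_1,\ldots,\ell_r)$ with linear forms $\ell_1,\ldots,\ell_r$, and take $V=\R[\ul X]_2$ still, but verify that $P$ is a projection of a spectrahedron by a bounded-degree Positivstellensatz on the cone.

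First I would recall the relevant representation theorem: on a polyhedral cone, every polynomial nonnegative on $C$ of degree $\le 2e$ that is \emph{homogeneous} lies in the quadratic module (indeed the preordering) generated by the $\ell_i$, with a degree bound depending only on $e$. Concretely, since $C$ is polyhedral, the homogeneous nonnegative polynomials admit representations $\sum_\alpha \si_\alpha \ell^\alpha$ with $\si_\alpha$ sums of squares and $\deg(\si_\alpha \ell^\alpha)\le 2e$; this is the homogeneous analogue of the Kuhlmann--Marshall--Schwartz / Scheiderer results already invoked in Corollary \ref{curve} for lines, now applied to each facet. I would then argue that the truncated quadratic module $\QM(\ell_1,\ldots,\ell_r)_d$ intersected with $\R[\ul X]_2$ is a projection of a spectrahedron (exactly as in the discussion preceding Theorem \ref{lasmain}, via Theorem \ref{rago}), and that for $d$ large enough relative to the degree $2$ it contains every element of $\R[\ul X]_2$ that is nonnegative on $C$. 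Hence $P=\R[\ul X]_2\cap\QM(\ell_1,\ldots,\ell_r)_d$ is a projection of a spectrahedron.

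With that in hand, Proposition \ref{trans} applied to $f\colon\R^4\to\R^m$ with this $V$ immediately yields that $\overline{\conv(f(C))}$ is the projection of a spectrahedron, completing the proof.

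The main obstacle is the bounded-degree representation statement for polynomials nonnegative on a polyhedral cone: I need not just that a nonnegative polynomial on $C$ lies in the preordering generated by the facet forms (which follows, e.g., by writing $C$ as a product of half-lines after a linear change of coordinates and localizing at the simplicial cones of a triangulation, using that nonnegativity on a simplicial cone is handled by Pólya/Handelman-type arguments), but that the degrees of the sum-of-squares multipliers are uniformly bounded once the degree of the input polynomial is fixed — here fixed at $2$. For homogeneous input of fixed degree this uniform bound is automatic by a compactness/finite-dimensionality argument: the set of homogeneous degree-$2$ polynomials nonnegative on $C$ is a closed convex cone in a finite-dimensional space, and one checks it equals the (closed) image of a fixed truncation of the preordering; since $C$ is polyhedral this truncation is already closed, so no closure is lost. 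Carefully setting up this uniform bound — and in particular choosing $d$ explicitly in terms of the facet structure of $C$ — is the technical heart of the argument; everything else is a direct invocation of Proposition \ref{trans} and Theorem \ref{rago}.
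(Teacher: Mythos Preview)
Your argument has a genuine gap: you never use the hypothesis that the ambient dimension is $4$, and without it the key step is simply false. The claim you need is that every homogeneous quadratic form nonnegative on the polyhedral cone $C$ lies in some fixed truncation of the preordering (or quadratic module) generated by the facet forms of $C$. Take $C=\R^5_{\ge 0}$: the quadratic forms nonnegative on $C$ are exactly the copositive forms, and by the Horn example there are copositive forms in five variables that are \emph{not} the sum of a positive semidefinite form and a form with nonnegative entries---equivalently, not in any truncation of the quadratic module generated by the $X_iX_j$. So the ``one checks it equals the image of a fixed truncation'' step is where the argument would fail, and no compactness/finite-dimensionality argument can rescue it, because the two cones are genuinely different in dimension $\ge 5$. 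The tools you invoke don't help either: P\'olya's theorem needs strict positivity and gives degree bounds depending on the form, and Handelman's theorem is for compact polytopes, not cones.

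The paper's proof uses the dimension hypothesis in exactly this place, via Diananda's theorem: for $n\le 4$, every copositive quadratic form in $n$ variables \emph{is} a sum of a positive semidefinite form and a nonnegative combination of the $X_iX_j$. This handles the case where $C$ is the nonnegative orthant in $\R^k$, $k\le 4$. For a general polyhedral cone $C\subseteq\R^4$, the paper does not try to get a representation in terms of the facet forms of $C$ (your triangulation idea runs into the problem that representations on simplicial pieces use the \emph{internal} walls, which are not nonnegative on all of $C$). Instead it triangulates $C$ into simplicial cones, applies Diananda on each piece to show each $\overline{\conv(f(C_i))}$ is a projected spectrahedron via Proposition~\ref{trans}, and then uses the Helton--Nie result that the closed convex hull of a finite union of projected spectrahedra is again one, together with Corollary~\ref{clos}. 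So the missing ingredients in your proposal are Diananda's theorem (the $n\le 4$ input) and the convex-hull-of-a-union step replacing the attempt at a global representation.
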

\begin{proof} Every polyhedral cone in $\R^4$ is a finite union of cones that can be transformed by a linear automorphism to the first orthant in some $\R^k$ with $k\leq 4$. This  follows from Caratheodory's Theorem for cones. If $C=C_1\cup\cdots\cup C_m$ then $$\overline{\conv(f(C))}=\overline{\conv(\overline{f(C_1)}\cup\cdots\cup \overline{f(C_m)})}.$$ So by the convex hull result from Helton and Nie \cite{HeltonNieNecSuffSDP} (see also \cite{NeSi}) and Theorem \ref{clos} it is enough to prove the Theorem for the first orthant $C$ in $\R^4$.

Every quadratic form in  $4$ variables that is nonnegative on the first orthant belongs to the quadratic module generated by the pairwise products of the variables $X_iX_j$. This is just a slight reformulation of the main result from Diananda \cite{MR0137686}. But then a degree bound condition on the sums of squares is fulfilled for any such representation, since no degree cancellation can occur when adding polynomials that are nonnegative on the first orthant.  So in fact each such nonnegative quadratic form is a positive combination of the $X_iX_j$ plus a sums of squares of linear forms. Now apply Proposition \ref{trans} with $\ul{p}=\{X_iX_j\mid 1\leq i,j\leq n\}$ and $V$ the space spanned by the quadratic forms and $1$.
\end{proof}

\section{Obstructions to the relaxation methods}\label{ex}
In this section we examine the assumption from Lasserre's Theorem, as stated in Theorem \ref{lasmain} above. That means, we want to know whether there exists some $d$ such that the truncated quadratic module $\QM(\ul p)_d$ contains all nonnegative linear polynomials. Note here that this condition is absolutely not  necessary for $\overline{\conv(S)}$ to be the projection of a spectrahedron. This is for example shown by Example 3.7 in \cite{NePlSch} (that we will discuss in more detail below). But in view of Theorem \ref{lasmain}, the condition is necessary and sufficient for the Lasserre approach to work. This brings up the question when this so called \textit{bounded degree representation property} for affine polynomials is fulfilled. 

A necessary condition is given by the following result:

\begin{Prop}\label{aux}
Let $p_1,...,p_r \in \R[\ul X]$, $S=\sS(\ul p)$ and $L$ be a line in $\R^n$ such that $S \cap L$ has non-empty interior relative to $L$.
Let $a\in S$ be a point that belongs to the relative  boundary of $\overline{\conv(S)} \cap L$ in $L$.  Assume that for all $p_i$ with 
$p_i(a)=0$ the vector $\bigtriangledown  p_i(a)$ is orthogonal to $L$. Then, for all $d$, the Lasserre relaxation $\sL(\ul p)_d$ strictly contains $\overline{\conv(S)}$.
\end{Prop}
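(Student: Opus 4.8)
The plan is to reduce to the one-dimensional situation on the line $L$ and then exhibit, for each fixed $d$, an affine linear polynomial $\ell$ that is nonnegative on $S$ but violated somewhere arbitrarily close to $a$ by every element of $\QM(\ul p)_d$; equivalently, by Theorem~\ref{lasmain}(ii) and the definition of $\sL(\ul p)_d$, it suffices to show that no $\QM(\ul p)_d$ contains all affine linear polynomials nonnegative on $S$. After an affine change of coordinates I may assume $a=0$ and that $L$ is the $X_1$-axis, with $S\cap L$ having nonempty interior in $L$ and $a=0$ a relative boundary point of $\overline{\conv(S)}\cap L$; so, say, $S\cap L\subseteq\{X_1\ge 0\}$ locally near $0$ (after possibly replacing $X_1$ by $-X_1$), and in fact $\overline{\conv(S)}\cap L=\{x\in L: x_1\ge 0\}\cap(\text{something})$, the key point being that the linear polynomial $\ell:=X_1$ is nonnegative on $S$ but vanishes at $a=0$.

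The heart of the argument is the following local obstruction: suppose for contradiction that $\ell=X_1$ (or more generally any affine linear $\ell\ge 0$ on $S$ vanishing at $a$ with $\nabla\ell$ not orthogonal to $L$) lies in $\QM(\ul p)_d$, so
\[
X_1=\si_0+\si_1 p_1+\cdots+\si_r p_r
\]
with $\si_i\in\sum(\R[\ul X]_d)^2$. Restrict this identity to the line $L$, i.e. substitute the parametrization of $L$; writing $t$ for the coordinate on $L$ with $a$ at $t=0$, we get $t=\bar\si_0(t)+\sum_i\bar\si_i(t)\,\bar p_i(t)$, where $\bar\si_i$ are still sums of squares of univariate polynomials (of degree $\le d$) and $\bar p_i(t)=p_i$ restricted to $L$. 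Now look at the behaviour at $t=0$. The left side vanishes to order exactly $1$ at $0$; a sum of squares $\bar\si_0$ either vanishes to even order or is bounded below by a positive constant, so it cannot supply the odd-order vanishing, and the crucial hypothesis enters here: for each $i$ with $p_i(a)=0$, since $\nabla p_i(a)\perp L$, the restriction $\bar p_i(t)$ vanishes to order $\ge 2$ at $t=0$ (its linear term along $L$ is $\nabla p_i(a)\cdot(\text{direction of }L)=0$). Hence every term $\bar\si_i\bar p_i$ with $p_i(a)=0$ vanishes to order $\ge 2$, and every term with $p_i(a)\ne 0$ has $\bar\si_i$ vanishing to even order while $\bar p_i(0)\ne 0$, so again these vanish to even order at $0$; and $\bar\si_0$ vanishes to even order. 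Thus the right-hand side, as a univariate function near $0$, either is bounded away from $0$ or vanishes to even order — it can never equal $t$, which changes sign at $0$ and vanishes to order exactly $1$. This contradiction shows $X_1\notin\QM(\ul p)_d$ for any $d$.

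From this the Proposition follows: $\ell=X_1$ is an affine linear polynomial nonnegative on $S$ that lies in no $\QM(\ul p)_d$, so for every $d$ the set $\QM(\ul p)_d\cap\R[\ul X]_1$ fails to contain all affine linear polynomials nonnegative on $S$; hence $\sL(\ul p)_d=\{x:\ell(x)\ge 0\ \forall\ell\in\QM(\ul p)_d\cap\R[\ul X]_1\}$ is strictly larger than $\overline{\conv(S)}=\{x:\ell(x)\ge 0\ \forall\ell\in\R[\ul X]_1,\ \ell\ge0\text{ on }S\}$. More concretely, $\sL(\ul p)_d$ contains a point on $L$ on the far side of $a$ (where $X_1<0$), since the separating polynomial $X_1$ is absent from the relaxation while the rest of the defining inequalities of $\sL(\ul p)_d$, having even-order vanishing at $a$ along $L$, do not cut there. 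The main obstacle I anticipate is making the sign/parity argument fully rigorous when $S\cap L$ is not an interval but a more complicated semialgebraic subset of $L$ — one must be careful that $a$ is a genuine relative boundary point of the \emph{convex hull} $\overline{\conv(S)}\cap L$, which is what guarantees the existence of the separating linear $\ell$ with the right vanishing behaviour, and that the parity bookkeeping for the $\bar\si_i$ (sums of squares can vanish to order $0$ or any even order, never odd) is applied correctly including the edge case where some $\bar p_i\equiv 0$ on $L$.
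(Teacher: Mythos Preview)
Your central parity computation is correct and coincides with the paper's: restricting any element of $\QM(\ul p)_d$ to $L$ gives a univariate polynomial that is either positive at $a$ or vanishes there to order $\ge 2$, so $X_1$ cannot belong to $\QM(\ul p)_d$. The gap is in how you use this.

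You claim that $\ell=X_1$ is nonnegative on $S$. But the hypothesis only says that $a$ lies on the relative boundary of $\overline{\conv(S)}\cap L$; this gives $X_1\ge 0$ on $\overline{\conv(S)}\cap L$, not on $S$ itself. In fact, in the intended application to non-exposed faces (Theorem~\ref{exp}) the line $L$ is chosen \emph{inside} a larger face $F_1$, so every supporting hyperplane to $S$ at $a$ contains the direction of $L$, and $X_1$ is certainly \emph{not} nonnegative on $S$. Thus $X_1\notin\QM(\ul p)_d$ does not, by itself, separate $\sL(\ul p)_d$ from $\overline{\conv(S)}$, and your final deduction breaks down. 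There is a related secondary issue: even if $X_1$ were nonnegative on $S$, invoking Theorem~\ref{lasmain}(ii) would require $S$ to have nonempty interior in $\R^n$, which is not assumed here; and your direct attempt to exhibit a point with $X_1<0$ in $\sL(\ul p)_d$ lacks the uniformity-in-$\ell$ argument needed to pass from ``each $\ell$ allows some negative point'' to ``some negative point satisfies all $\ell$''.

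The paper repairs both problems at once by carrying out the whole argument on $L$. Restricting the $p_i$ to $L$ yields univariate $p_i'$ with $\sL(\ul p')_d\subseteq \sL(\ul p)_d\cap L$. On $L$ one genuinely has $X_1\ge 0$ on $S'=S\cap L$, and $S'$ has nonempty interior in $L$ by hypothesis, so Theorem~\ref{lasmain}(ii) applies to the one-dimensional relaxation. After adjoining an auxiliary constraint $p'_{r+1}=c-X_1$ bounding $\conv(S')$ from the right, your parity argument shows $X_1\notin\QM(\ul p',p'_{r+1})_d$, whence $\sL(\ul p',p'_{r+1})_d\supsetneq[0,c]$; but this relaxation is still contained in $\{X_1\le c\}$, so it must contain a negative point, which then lies in $\sL(\ul p)_d\setminus\overline{\conv(S)}$.
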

\begin{proof}
By applying a linear transformation we may assume $L$ to be the $X_1$-axis, $a$ to be the origin and $\overline{\conv(S)} \cap L$ to be on the positive
half axis. Let $p_1',\ldots,p_r'\in \R[X_1]$ be the polynomials obtained from $p_1,\ldots,p_r$ by setting the last $n-1$ variables to zero. 
We have $\sL(\ul p')_d \subseteq \sL(\ul p)_d \cap \R$ for any $d$. This inclusion follows from the fact that each polynomial $f\in\QM(\ul p)_d$ 
ends up in $\QM(\ul p')_d$ when setting the last $n-1$ variables to zero.

Let $S'=\sS(\ul p')$, so $S'= S\cap \R$. If $\conv(S')$ is some closed interval $[0,c]$ then let $p'_{r+1}=c-X_1$, 
otherwise (i.e. if $\conv(S')=[0, \infty)$) let $p'_{r+1}=1$ (just to
keep the notation uniform). Then $\sS(\ul p')=\sS(\ul p',p'_{r+1})$. Note that $\sL(\ul p',p'_{r+1})_d \subseteq \sL(\ul p')_d$, 
and since $S'$ has an interior point $\sL(\ul p',p'_{r+1})_d = \conv(S')$ holds if and only if every nonnegative affine 
linear polynomial from $\R[X_1]$  belongs to $\QM(\ul p',p'_{r+1})_d$, by Theorem \ref{lasmain}.
Consider the polynomial $X_1$, that is nonnegative on $S'$, and suppose there exists a representation
$$X_1 = \si + \sum_{i \in I} \sigma_i p'_i + \sum_{j \in J} \sigma_j p'_j,$$
where $i \in I$ if $p_i(0)>0$ and $i \in J$ otherwise. For $i\in I$,  $p'_i$ has a positive constant term, so
 $\sigma_i$ cannot have a constant term, and its homogeneous part of minimal degree must be at least quadratic. The same is true for $\si$.
So none of the elements $\si$ and $\si_ip'_i$ where $i\in I$ contains the monomial $X_1$. But by hypothesis, $\bigtriangledown  p_j(0)$ is orthogonal
to the $x_1$-axis for $j\in J$, which implies that the terms of $p'_j$ have all degree at least $2$. This is a contradiction. So $\sL(\ul p',p'_{r+1})_d$ is not
$\conv(S')$. Since $p'_{r+1} \in \QM(\ul p',p'_{r+1})_d$ this implies the existence of some negative $b$ with 
$b \in \sL(\ul p',p'_{r+1})_d \subseteq \sL(\ul p')_d \subseteq L \cap \sL(\ul p)_d$. But since $b \not \in \overline{\conv(S)}$ by hypothesis,
this implies $\sL(\ul p)_d \not = \overline{\conv(S)}$.
\end{proof}

This gives an alternative and more elementary proof to Theorem 3.5 in \cite{NePlSch}:

\begin{Thm}\label{exp}
 Let $p_1,...,p_r \in \R[\ul X]$ be such that $S=\sS(\ul p)$ is convex and has non-empty interior. If $S$ has a non-exposed face, then for all $d$, the Lasserre relaxation $\sL(\ul p)_d$ strictly contains $S$.
\end{Thm}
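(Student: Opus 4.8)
The plan is to derive Theorem \ref{exp} as an immediate consequence of Proposition \ref{aux}; the real work is to produce, from a non-exposed face $F$ of $S$, a line $L$ and a point $a \in F$ satisfying the hypotheses of that proposition. First I would pick a point $a$ in the relative interior of the non-exposed face $F$. Since $F$ is not exposed, there is no affine linear polynomial $\ell$ that is nonnegative on $S$ and vanishes exactly on a set containing $F$ as a face in the exposed way; more usefully, I want to exploit the local geometry at $a$: because $F$ is non-exposed, the normal cone to $S$ at $a$ is "too small", and in particular there should exist a line $L$ through $a$ such that $L$ meets the interior of $S$ (so $S \cap L$ has nonempty interior relative to $L$) while $a$ is a relative boundary point of $\conv(S) \cap L = S \cap L$ in $L$.

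The key step is then to check the gradient condition: for every $p_i$ with $p_i(a) = 0$, the gradient $\nabla p_i(a)$ must be orthogonal to $L$. Here I would argue as follows. The active constraints $p_i$ at $a$ each satisfy $p_i \ge 0$ on $S$ and $p_i(a) = 0$, so $\nabla p_i(a)$ lies in the normal cone $N_S(a)$ (up to sign, $-\nabla p_i(a)$ is an outer normal, or it is $0$). If $a$ is chosen in the relative interior of the non-exposed face $F$, and $L$ is chosen to lie in a direction $v$ such that $v$ is orthogonal to the entire normal cone $N_S(a)$ — equivalently $v$ lies in the lineality directions of the tangent cone — then automatically $\langle \nabla p_i(a), v\rangle = 0$ for all active $i$. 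The non-exposedness of $F$ is precisely what guarantees that such a direction $v$ can be chosen so that $a$ is still a relative boundary point of $S \cap L$: if every line through $a$ orthogonal to all active gradients had $a$ in its relative interior of $S \cap L$, one could build an exposing linear functional for $F$, contradicting non-exposedness. This "if every such line behaves well then $F$ is exposed" direction is the heart of the matter and mirrors the argument of Theorem 3.5 in \cite{NePlSch}; I expect this to be the main obstacle, since it requires carefully relating the combinatorics of active constraints, the normal cone, and the exposed/non-exposed dichotomy.

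Once $L$ and $a$ are in hand with all hypotheses of Proposition \ref{aux} verified, that proposition yields directly that $\sL(\ul p)_d$ strictly contains $\overline{\conv(S)} = S$ (using convexity and closedness of $S$) for every $d$, which is the assertion. I would also remark, as the excerpt does, that this reproves Theorem 3.5 of \cite{NePlSch} by a more elementary route, the only genuinely new ingredient being the reduction to the one-dimensional situation along $L$ carried out inside Proposition \ref{aux}.
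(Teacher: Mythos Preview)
Your overall strategy matches the paper's: reduce to Proposition~\ref{aux} by producing a point $a$ in the relative interior of $F$ and a suitable line $L$. The observation that each active gradient $\nabla p_i(a)$ yields a supporting linear functional on $S$ (via convexity) is also exactly what the paper uses for the gradient condition.

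The gap is the actual construction of $L$. You phrase it as a contrapositive --- if no such line existed one could expose $F$ --- but you do not say how to build the exposing functional, and you flag this yourself as the main obstacle. The paper bypasses this by a direct construction from a standard characterization of non-exposed faces: since $F$ is non-exposed there is a face $F_1 \supsetneq F$ such that every supporting hyperplane containing $F$ also contains $F_1$. Take $a$ in the relative interior of $F$, $b$ in the relative interior of $F_1$, and let $L$ be the line through $a$ and $b$. Then $a$ is a relative boundary point of $S\cap L$: were $a-\epsilon(b-a)\in S$ for some $\epsilon>0$, writing $a$ as a convex combination of this point and $b$ would force $b\in F$ by the face property, contradicting $b\in\mathrm{relint}\,F_1$. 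For the gradient condition: each active $\nabla p_j(a)$ defines a supporting functional $\ell$ vanishing at $a$; since $a\in\mathrm{relint}\,F$, $\ell$ vanishes on all of $F$, hence on $F_1$ by the defining property of $F_1$, hence on $L$. That is the desired orthogonality, obtained in two lines with no normal-cone apparatus.

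If you carry out your contrapositive carefully, you are forced to pick $b$ in the relative interior of the smallest exposed face containing $F$, which plays exactly the role of $F_1$; so your route, once completed, collapses to the paper's. A minor side remark: you oscillate between ``$v$ orthogonal to the entire normal cone $N_S(a)$'' and ``$v$ orthogonal to the active gradients''; only the latter is what Proposition~\ref{aux} requires, and it is in general a strictly weaker condition, since the active gradients need not span $N_S(a)$.
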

\begin{proof}
Let $F\subseteq S$ be a non-exposed face. Then there exists some face $F_1$ of $S$, such that $F \subsetneq F_1$ and for all supporting hyperplanes 
$H$ containing $F$, $F_1 \subseteq H$. Let $a$ be a point in the relative interior of $F$ and $L$ a line passing through $a$ and some point 
in the relative interior of $F_1$. By convexity and closedness of $S$ we have that $a$ belongs to the relative boundary of $\overline{\conv(S)}\cap L$, and we just have to verify the gradient condition at $a$.

Suppose $p_j(a)=0$, and consider $v:=\bigtriangledown  p_j(a)$.
For any $x \in \R^n$ the product  $v \cdot (x-a) $ equals the derivative of $p_j$ at $a$ in direction of $(x-a)$, so by convexity of $S$ we get 
$v\cdot (x-a)\geq 0$ whenever $x\in S.$ Hence the linear polynomial $\ell:=v_1(X_1-a_1) + \cdots + v_n(X_n-a_n)$ is 
nonnegative on  $S$. Since $\ell$ vanishes at $a,$ which lies in the relative interior of $F$, it vanishes on the whole of $F$ and thus also on $F_1$. 
Then, since it vanishes in two points of $L$, it must vanish on the entire line, which implies that $v$ is orthogonal to $L$, and Proposition \ref{aux} gives us the result.
\end{proof}

The lemma also shows us the following result:

\begin{Thm} \label{prop:singularity}
Let $p_1,\ldots, p_r\in\R[\ul X]$ and let $S:=\sS(\ul p)\subseteq\R^n$ have non-empty interior. Let $a$ be point in the boundary of $\overline{\conv(S)}$
that is also in $S$, and suppose that all active constrains at $a$ are singular. Then for all $d$, the Lasserre relaxation $\sL(\ul p)_d$ strictly contains $\overline{\conv(S)}$. 
\end{Thm}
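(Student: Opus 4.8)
The plan is to deduce the statement directly from Proposition \ref{aux} by producing a suitable line $L$ through $a$. The point of the hypothesis is that it makes the gradient condition of Proposition \ref{aux} vacuous: if $p_i(a)=0$ then, by assumption, the constraint is singular at $a$, i.e. $\bigtriangledown p_i(a)=0$, and the zero vector is orthogonal to every line. So the only thing left to arrange is a line $L$ passing through $a$ such that $S\cap L$ has nonempty interior relative to $L$, and such that $a$ lies on the relative boundary of $\overline{\conv(S)}\cap L$ in $L$.

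First I would fix a point $b\in\interior(S)$, which exists by assumption. Since $\interior(S)\subseteq\interior(\overline{\conv(S)})$ while $a\in\partial\overline{\conv(S)}$, we have $a\neq b$, so the line $L$ through $a$ and $b$ is well defined; take this as our $L$. A ball around $b$ contained in $S$ meets $L$ in an open segment of $L$, so $S\cap L$ has nonempty interior relative to $L$, which is the first requirement of Proposition \ref{aux}.

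Next I would check that $a$ belongs to the relative boundary of $\overline{\conv(S)}\cap L$ in $L$. The set $\overline{\conv(S)}\cap L$ is a closed convex subset of $L$, hence a (possibly unbounded) closed interval, and it contains both $a$ and $b$; moreover $b$ lies in its relative interior because $b\in\interior(\overline{\conv(S)})$. Suppose, for contradiction, that $a$ also lies in the relative interior of this interval. Then there is a point $a^-\in\overline{\conv(S)}\cap L$ strictly on the opposite side of $a$ from $b$ along $L$, so that $a$ lies in the open segment joining $a^-$ and $b$. Since $b$ is an interior point of the convex set $\overline{\conv(S)}$ and $a^-\in\overline{\conv(S)}$, every point of the open segment between $a^-$ and $b$ is interior to $\overline{\conv(S)}$; in particular $a\in\interior(\overline{\conv(S)})$, contradicting $a\in\partial\overline{\conv(S)}$. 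Hence $a$ is an endpoint of the interval $\overline{\conv(S)}\cap L$, i.e. it lies on its relative boundary in $L$. Now all hypotheses of Proposition \ref{aux} are met, and it yields $\sL(\ul p)_d\supsetneq\overline{\conv(S)}$ for every $d$.

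There is essentially no hard step here: the result is a short reduction to Proposition \ref{aux}, with the singularity assumption doing exactly the work of killing the gradient condition. The only point that requires a little care is the last one, namely that $a$ lies on the relative boundary of the one-dimensional slice $\overline{\conv(S)}\cap L$ rather than merely being a boundary point of $\overline{\conv(S)}$ itself; this is handled by the elementary convexity fact that the open segment from an interior point of a convex set to any of its points consists of interior points.
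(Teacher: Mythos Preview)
Your proof is correct and follows exactly the paper's approach: choose a line $L$ through $a$ and an interior point of $S$, then apply Proposition~\ref{aux}. You have simply spelled out in detail the two verifications (that $S\cap L$ has relative interior, and that $a$ is a relative boundary point of $\overline{\conv(S)}\cap L$) which the paper leaves implicit.
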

\begin{proof}
Just consider a line $L$ passing through $a$ and through the interior of $S$ and apply Proposition \ref{aux}.
\end{proof}

\begin{Ex} Consider the semi-algebraic set $S=\{ (x,y) \in \R^2: p(x,y):=-x^4+x^3-y^2 \geq 0\}$. The convex hull of $S$ is intersected by the 
$x$-axis in the segment $[0,1]$, which has non-empty interior. Furthermore $p$ has a singularity at the origin, hence we are in the conditions of
 Theorem \ref{prop:singularity} and the Lasserre hierarchy does not converge in finitely many steps, although it does approximate the set $\conv(S)$
as shown in Figure \ref{cusp}.
 
\begin{figure}[ht]
\begin{center}
\hfill 
\includegraphics[scale=0.3]{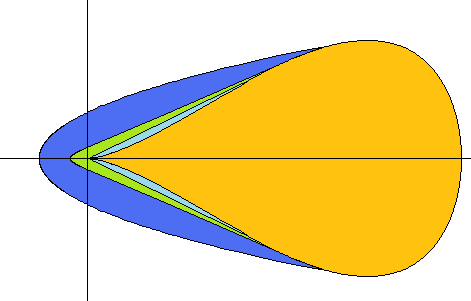}\hfill\
\caption{\small From the smallest to the largest: the sets $S$, $\conv(S)$, $\sL(\ul p)_2$ and $\sL(\ul p)_1$. } \label{cusp}
\end{center}
\end{figure}
\end{Ex}
 
The same general idea we used for the Lasserre relaxations can also be applied to the theta body construction. To do that, however, we need some
auxiliary definitions. Let $I$ be any ideal, and $p$ a point in $\V_{\R}(I)$. The {\bf tangent space} $T_{p}(I)$ is the affine space through $p$
that is orthogonal to the space spanned by the gradients of all polynomials in $I(\V_{\R}(I))$, the vanishing ideal of $\V_\R(I)$. We say that a point $p \in \V_{\R}(I)$ 
on the boundary of
$\conv(\V_{\R}(I))$ is {\bf convex-non-singular} if $T_{p}(I)$ is tangent to $\conv(\V_{\R}(I))$ i.e., if it does not intersect its relative interior;
otherwise we say that $p$ is {\bf convex-singular}.

\begin{Thm} \label{prop:theta_singularity}
Let $I$ be any ideal such that $\V_{\R}(I)$ has a convex-singular point, then for all $d$ $\TH_d(I)$ strictly
contains $\overline{\conv(\V_{\R}(I))}$.
\end{Thm}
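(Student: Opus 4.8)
The plan is to mimic closely the proof of Theorem \ref{exp}, translating the gradient condition of Proposition \ref{aux} into the tangent-space language appropriate for the theta body hierarchy, and to establish the theta-body analogue of Proposition \ref{aux} along the way. Let $p\in\V_\R(I)$ be a convex-singular point on the boundary of $\conv(\V_\R(I))$. By definition the tangent space $T_p(I)$ meets the relative interior of $\conv(\V_\R(I))$; pick a point $q$ in that intersection and let $L$ be the line through $p$ and $q$. Since $L\subseteq T_p(I)$ and $q$ lies in the relative interior of the convex hull while $p$ lies on its boundary, the point $p$ is in the relative boundary of $\overline{\conv(\V_\R(I))}\cap L$, and $L$ is a genuine line (not a point) whose intersection with $\overline{\conv(\V_\R(I))}$ is a nondegenerate segment or ray.

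Next I would run the one-dimensional reduction. After an affine change of coordinates assume $L$ is the $X_1$-axis, $p$ is the origin, and $\overline{\conv(\V_\R(I))}\cap L$ lies on the nonnegative $X_1$-half-axis. Setting the last $n-1$ variables to zero sends $I$ to an ideal $I'\subseteq\R[X_1]$, and one checks that if $\ell\in\Sigma(d,I)$ is linear then its restriction lies in $\Sigma(d,I')$, hence $\TH(I')_d\subseteq\TH(I)_d\cap L$. Now $X_1$ is a linear polynomial nonnegative on $\V_\R(I')$ (which is contained in the nonnegative half-axis since $\V_\R(I')\subseteq\V_\R(I)\cap L$). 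Suppose for contradiction $X_1\in\Sigma(d,I')$, i.e.\ $X_1=\sigma+h$ for a sum of squares $\sigma$ with $\deg\sigma\le 2d$ and $h\in I'$. The crucial point is that $I'$ contains no polynomial with a nonzero linear term and zero constant term other than what is forced to vanish: because $L\subseteq T_p(I)$, every polynomial in $I(\V_\R(I))$ has gradient at $p$ orthogonal to $L$, so when restricted to $L$ and having a zero at the origin it has no $X_1$-term; the same applies to every element of $I'$ (up to the subtlety that $I'$ may be larger than the restriction of $I(\V_\R(I))$, which I handle by noting $\V_\R(I')$ still sits on the half-axis through the origin, so any $h\in I'$ vanishing at $0$ to first order has no linear term — more precisely $h(0)=0$ forces $\sigma(0)=0$, hence $\sigma$ starts in degree $\ge 2$, so the $X_1$-coefficient of $\sigma+h$ equals the $X_1$-coefficient of $h$, and we need this to be $0$). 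Matching the $X_1$-coefficient then gives $1=0$, a contradiction; therefore $X_1\notin\Sigma(d,I')$, so $\TH(I')_d$ is strictly larger than $\conv(\V_\R(I'))$, which forces a point of $\TH(I')_d$ strictly to the left of the origin, i.e.\ outside $\overline{\conv(\V_\R(I))}$. Since $\TH(I')_d\subseteq L\cap\TH(I)_d$ this yields $\TH(I)_d\supsetneq\overline{\conv(\V_\R(I))}$ for every $d$.

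The step I expect to be the main obstacle is the one flagged parenthetically above: controlling the linear part of an arbitrary element $h\in I'$ of the \emph{restricted} ideal, since $I'$ need not be real radical and need not equal the restriction of $I(\V_\R(I))$. The clean way around this is to argue entirely on $L$: by construction $\conv(\V_\R(I'))$ is either $[0,c]$ or $[0,\infty)$, so adjoining $p'_{r+1}:=c-X_1$ (resp.\ $1$) and invoking Theorem \ref{lasmain}(ii) on $\R[X_1]$ reduces the question to whether $X_1\in\QM(p'_{r+1})_d+I'\cap\R[X_1]_{2d}$; and here the absence of a linear term in any element of $I'$ vanishing at the origin follows from the convex-singular hypothesis exactly as in Theorem \ref{exp}: a linear form that is nonnegative on $\V_\R(I)$ and vanishes at the relative-interior point $p$ of the exposed face it cuts out must vanish on that whole face, and since $L\subseteq T_p(I)$ it vanishes on all of $L$, so $I(\V_\R(I))$ — and with it the relevant truncation — contributes nothing of degree one along $L$. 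Assembling these pieces gives the theorem.
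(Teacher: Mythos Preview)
Your restriction-to-the-line strategy, while natural in view of Proposition~\ref{aux}, does not carry over to the theta body setting, and the gap is not merely technical. The problem is that the restricted ideal $I'=\{f(X_1,0,\dots,0):f\in I\}$ is almost never real radical, so Theorem~\ref{thm:thetamain}(ii) is unavailable for it; and without real-radicality the implication ``$X_1\notin\Sigma(d,I')\Rightarrow\TH(I')_d$ contains a negative point'' simply fails. Concretely, take $I=\langle X_1^3-X_2^2\rangle\subseteq\R[X_1,X_2]$ (the cuspidal cubic). The origin is convex-singular: its tangent space is all of $\R^2$, which meets the interior of $\conv(\V_\R(I))=\{x_2^2\le x_1^3\}$. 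Choosing $L$ to be the $X_1$-axis gives $I'=(X_1^3)$. For $d\ge 2$ one checks that $aX_1+b\in\Sigma(d,I')$ precisely when $b>0$ or $a=b=0$: one needs a nonnegative polynomial $\sigma$ of degree $\le 2d$ with $\sigma(0)=b$, $\sigma'(0)=a$, $\sigma''(0)=0$, which exists for every $a$ once $b>0$ (rescale $X_1$) but forces $a=0$ when $b=0$. Hence $\TH(I')_d=\{0\}$, which sits entirely inside $\overline{\conv(\V_\R(I))}\cap L=[0,\infty)$; your inclusion $\TH(I')_d\subseteq\TH(I)_d\cap L$ therefore yields nothing. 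Two further slips: $I'$ is \emph{contained in}, not larger than, the restriction of $I(\V_\R(I))$; and $\conv(\V_\R(I'))$ need not be a nondegenerate interval --- in the cusp example it is $\{0\}$ --- so your appeal to Theorem~\ref{lasmain}(ii), which also requires nonempty interior, is blocked as well.

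The paper's argument avoids restriction altogether. It first replaces $I$ by the real radical $J=I(\V_\R(I))$, noting $\TH(J)_d\subseteq\TH(I)_d$. Assuming $\TH(J)_d=\overline{\conv(\V_\R(I))}$ for contradiction, Theorem~\ref{thm:thetamain}(ii) --- now legitimately applicable since $J$ is real radical --- gives a representation $\ell=\sigma+g$ with $g\in J$ for any supporting linear functional $\ell$ at $p$. A single gradient evaluation at $p$ in the direction $q-p$ (with $q\in T_p(I)$ in the relative interior) then gives the contradiction: $\nabla\sigma(p)=0$ because $\sigma$ is a sum of squares vanishing at $p$, and $(q-p)\cdot\nabla g(p)=0$ by definition of $T_p(I)$, so $\ell(q)=\ell(p)=0$, contradicting $\ell>0$ on the relative interior. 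No one-variable analysis is needed.
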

\begin{proof}
Let $J$ be the vanishing ideal of $\V_{\R}(I)$. Since $I$ is contained in $J$, $\TH_d(J) \subseteq \TH(I)_d$, so it is enough to show that 
$\TH_d(J) \not = \overline{\conv(\V_{\R}(I))}$. Suppose we have equality. 
Since $J$ is real radical, Theorem \ref{thm:thetamain} tell us that any linear polynomial that is
nonnegative in $\V_{\R}(I)$ must be in $\Sigma(d,J)$. Let $p$ be the convex-singular point of $\V_{\R}(I)$. Since $p$ is in the boundary
of $\conv(\V_{\R}(I))$  there exists a linear polynomial $\ell$ that is zero in $p$ and positive on the relative interior of  $\conv(\V_{\R}(I))$. Therefore $\ell= \sigma + g$
where $\sigma$ is a sum of squares and $g \in J$. Let $q$ be a point in $T_{p}(I)$ that is in the relative interior of $\conv(\V_{\R}(I))$.
We have 
$$(q-p) \cdot \nabla \ell(p)= (q-p) \cdot\nabla \sigma(p) + (q-p) \cdot \nabla g(p).$$
But since $\sigma$ is a sum of squares vanishing at $p$, it must have a double zero there so its gradient also vanishes there, and since $q$ 
belongs to $T_p(I)$ then for all $g \in J$, $(q-p)$ is orthogonal to their gradients at $p$, so we have that the derivative of $\ell$ in the
direction of $(q-p)$ is zero. Since $\ell$ is linear, this implies that it vanishes at $q$, which is a contradiction.
\end{proof}

\begin{Rem}
Note that if $J= I(\V_{\R}(I))$ is generated by a single polynomial (so $\V_{\R}(I)$ is a hypersurface), then any singular point $p$ from $\V_\R(I)$ that belongs to the boundary of $\conv(\V_{\R}(I))$ is convex-singular. This is clear since the tangent space at $p$ is the whole of $\R^n$ in that case.
\end{Rem}

\begin{Ex}(i)
An example for the above remark is the (compact) Zitrus surface defined by $x^2+z^2 +(y^2-1)^3=0$ in $\R^3$. It has a singularity at $(0,1,0),$ which belongs to the boundary of the convex hull, and thus each theta body relaxation strictly contains the convex hull of the surface. The boundary equations for the convex hull of this surface have been examined in detail by Sturmfels and Ranestad in \cite{sr}, Section 4.2.

(ii) Consider the variety $V_\R(I)$ in $\R^3$ defined by the ideal $$I=\langle x^2+y^2+z^2-4, (x-1)^2+y^2-1\rangle.$$ It has a singularity at the point $p=(2,0,0)$, which belongs to the boundary of the convex hull of $\V_\R(I)$. This singularity is however not convex-singular, as one easily checks. And indeed already the first theta body relaxation equals $\conv(\V_\R(I)).$ To see this first note that $I$ can also be defined by $p_1=(x-1)^2+y^2-1$ and $p_2=2x +z^2-4$. Write $I_1=\langle p_1\rangle$ and $I_2=\langle p_2\rangle$. Then note  $$\conv(\V_\R(I))=\conv(\V_\R(I_1))\cap \conv(\V_\R(I_2)).$$ Since $\TH_d(I)\subseteq \TH_d(I_1)\cap \TH_d(I_2)$ holds obviously, it is enough to show that the theta body relaxations for $I_1$ and $I_2$ are exact in the first step. But this follows for example from Lemma 5.5. in \cite{GPT}, since $p_1$ and $p_2$ are convex quadrics. The example shows that the notion of a convex-singular point is crucial in Theorem \ref{prop:theta_singularity}.
\end{Ex}

We go back to Theorem \ref{exp}. It says that a convex basic closed set $S$ can only equal some relaxation $\sL(\ul p)_d$ if all of its faces are exposed. In \cite{NePlSch} the question is raised whether this can be generalized:

\begin{Question}{\cite[Remark 3.8]{NePlSch}}\label{q1}
\begin{itemize}
\item[(i)] Is Theorem \ref{exp} still true with $S$ replaced by $\overline{\conv(S)},$ if $S$ is non-convex?
\item[(ii)] More generally, are all faces of $\sL(\ul p)_d$ exposed for all $d$ and $\ul p$?
\end{itemize}
\end{Question}
One can also ask if Theorem \ref{exp} can be generalized to the theta body relaxations:
\begin{Question}\label{q2}
Let $I\subseteq \R[\ul X]$ be an ideal such that $\TH(I)_d=\overline{\conv(\V_{\R}(I))}$. Are all faces of $\TH(I)_d$ exposed faces in this case?
\end{Question}

The answer to all these questions is negative, as we will show.

\begin{Prop} \label{counterexample}
Let $p_1=Y, p_2=1-Y,p_3=Y-X^3,p_4=1+X$ define the set $S=\sS(\ul p)\subseteq\R^2$.  Then $\sL(\ul p)_1$ is the convex hull of $S \cup \{(1/3,0)\}$.
\end{Prop}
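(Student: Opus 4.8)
The plan is to prove the two inclusions separately, exhibiting explicit certificates for $\sL(\ul p)_1\subseteq\conv(S\cup\{(1/3,0)\})$ and doing a monomial‑by‑monomial analysis of sum‑of‑squares representations for the reverse inclusion.

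\emph{The inclusion $\sL(\ul p)_1\subseteq\conv(S\cup\{(1/3,0)\})$.} It is enough to produce affine‑linear members of $\QM(\ul p)_1$ that already cut out the right‑hand side. Besides $p_1=Y$, $p_2=1-Y$, $p_4=1+X$, for each $x_0\in[1/2,1]$ put $\ell_{x_0}:=Y-3x_0^2X+2x_0^3$; using $(X-x_0)^2(X+2x_0)=X^3-3x_0^2X+2x_0^3$ one checks
$$\ell_{x_0}=1\cdot p_3+(X-x_0)^2\,p_4+\bigl(\sqrt{2x_0-1}\,(X-x_0)\bigr)^2\in\QM(\ul p)_1,$$
the last summand being a genuine square because $2x_0\ge1$. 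The constraints $Y\ge0$, $1-Y\ge0$, $1+X\ge0$ together with $\ell_{x_0}\ge0$ for all $x_0\in[1/2,1]$ (i.e. $y\ge\sup_{x_0\in[1/2,1]}(3x_0^2x-2x_0^3)$) define exactly $\conv(S\cup\{(1/3,0)\})$: the lower envelope of this family, intersected with $y\ge0$, is $y=0$ on $[-1,1/3]$, the segment from $(1/3,0)$ to $(1/2,1/8)$ on $[1/3,1/2]$ (note $4Y-3X+1=4\ell_{1/2}$), and $y=x^3$ on $[1/2,1]$ — precisely the lower boundary of $\conv(S\cup\{(1/3,0)\})$ — while $Y\le1$, $X\ge-1$ give the remaining boundary. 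Since all these polynomials lie in $\QM(\ul p)_1\cap\R[\ul X]_1$, the relaxation is contained in the set they define.

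\emph{The inclusion $\conv(S\cup\{(1/3,0)\})\subseteq\sL(\ul p)_1$.} As $\sL(\ul p)_1$ is closed, convex and contains $S$, it suffices to show $(1/3,0)\in\sL(\ul p)_1$, i.e.\ $\ell(1/3,0)\ge0$ for every affine‑linear $\ell\in\QM(\ul p)_1$. Fix a representation $\ell=\sigma_0+\sigma_1p_1+\sigma_2p_2+\sigma_3p_3+\sigma_4p_4$ with each $\sigma_i$ a sum of squares of affine‑linear forms, i.e.\ a positive semidefinite quadratic form in $(1,X,Y)$. First I would exploit that $\deg\ell\le1$: the monomial $X^5$ in $\ell$ can only arise from $-(\text{coeff.\ of }X^2\text{ in }\sigma_3)X^5$, so the $X^2$‑coefficient of $\sigma_3$ vanishes; positive semidefiniteness then kills the $X$‑ and $XY$‑coefficients of $\sigma_3$, and the monomial $X^3Y^2$ next kills its $Y^2$‑ and hence $Y$‑coefficient, so $\sigma_3=\gamma$ is a nonnegative constant. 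If $\gamma=0$ then $\ell\in\QM(p_1,p_2,p_4)_1$ is nonnegative on the strip $\{0\le y\le1,\ x\ge-1\}\ni(1/3,0)$ and we are done. If $\gamma>0$, matching the $X^3$‑coefficient of $\ell$ shows the $X^2$‑coefficient of $\sigma_4$ equals $\gamma$, so a Schur‑complement decomposition gives $\sigma_4=\gamma(X+\mu Y+\nu)^2+r(Y)$ with $r\ge0$ a quadratic in $Y$ alone; matching the $X^2$‑coefficient of $\ell$ (whose only contributions are the nonnegative $X^2$‑coefficients of $\sigma_0$ and $\sigma_2$, the $X^2$‑coefficient $\gamma$ of $\sigma_4$, and the $X$‑coefficient $2\gamma\nu$ of $\sigma_4$ times the $X$ of $p_4$) gives $0=(\text{nonneg})+\gamma+2\gamma\nu$, so $\nu\le-\tfrac12$. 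Evaluating the representation at $(1/3,0)$ then yields
$$\ell(1/3,0)=\sigma_0(1/3,0)+\sigma_2(1/3,0)-\frac{\gamma}{27}+\frac43\,\sigma_4(1/3,0)\ \ge\ -\frac{\gamma}{27}+\frac43\gamma\Bigl(\tfrac13+\nu\Bigr)^2\ \ge\ -\frac{\gamma}{27}+\frac43\cdot\frac{\gamma}{36}=0,$$
using $r(0)\ge0$, nonnegativity of $\sigma_0,\sigma_2$ at real points, and $(\tfrac13+\nu)^2\ge\tfrac1{36}$. Hence $(1/3,0)\in\sL(\ul p)_1$.

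\emph{Main obstacle.} Everything in the first inclusion, and the description of $\conv(S\cup\{(1/3,0)\})$ by its supporting lines, is routine convex geometry and bookkeeping. The delicate part is the structural analysis in the second inclusion: one must account for \emph{all} monomial contributions of an arbitrary sum‑of‑squares representation in order to force $\sigma_3$ to be a constant and to extract the inequality $\nu\le-\tfrac12$. That inequality is exactly what makes the arithmetic at $(1/3,0)$ balance to $0$, which is also why $(1/3,0)$ — and no point further out — is the single extra extreme point.
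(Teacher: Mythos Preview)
Your proof is correct and follows essentially the same approach as the paper: the same explicit certificate $\ell_{x_0}=(Y-X^3)+(X-x_0)^2(1+X)+(2x_0-1)(X-x_0)^2$ for the first inclusion, and the same monomial analysis (cubic multiplier forced to a constant, $X^2$-coefficient yielding the key inequality that makes the arithmetic at $(1/3,0)$ exactly balance) for the second. The only cosmetic differences are that the paper first translates by $(1/3,0)$ and argues by contradiction, and uses the $2\times2$ Gram-submatrix determinant where you use the equivalent Schur-complement decomposition $\sigma_4=\gamma(X+\mu Y+\nu)^2+r(Y)$.
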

\begin{proof}
Let $C= \conv(S \cup \{(1/3,0)\})$. Then $C$ is cut out by the infinitely many affine linear inequalities 
$$\{Y \geq 0, 1-Y \geq 0, 1 + X \geq 0, Y-3a^2X +2a^3 \geq 0\mid a \in[1/2,1]\},$$
since the polynomial $\ell_a:=Y-3a^2X +2a^3$ defines the half-plane containing $S$
and tangent to the curve $Y=X^3$ at the point $(a,a^3)$. To prove $\sL(\ul p)_1 \subseteq C$ it is thus enough to
show that the polynomials $\ell_a$ belong to  $\QM(\ul p)_1$ for all  $a \geq 1/2$. To see this, note that
$$\ell_a=(\sqrt{2a-1}(X-a))^2 + (Y-X^3) + (X-a)^2(X+1).$$

To prove the inclusion $C \subseteq \sL(\ul p)_1$, using the fact that $\sL(\ul p)_1$ is convex and contains $S$, it is enough to show
that $(1/3,0) \in \sL(\ul p)_1$. Since translations commute with taking Lasserre relaxations, we will instead consider the set of polynomials
$p_1'=Y,p_2'=1-Y,p_3'=X+4/3,p_4'=Y-X^3-X^2-X/3-1/27$ obtained from the $p_i$ by replacing $X$ by $X+1/3$, and prove that $(0,0) \in \sL(\ul p')_1$. Suppose
that is not the case. Then there must exist $\epsilon,\mu>0$ such that $\ell=Y-\mu X - \epsilon$ 
belongs to $\QM(\ul p')_1$. This means
$$\ell=\si_0+\si_1Y+\si_2(1-Y)+\si_3(X+4/3)+c(Y-X^3-X^2-X/3-1/27),$$
where $c$ is simply a nonnegative constant, since $\deg(p'_4)=3$.
Note that $\si_0$ has at most degree $2$, as do $\si_1,\si_2$ and $\si_3$. 

Let $\si_3=a_1 X^2 + a_2 X + a_3 + a_4 Y^2 +a_5 XY + a_6Y$. In order to cancel the $X^3$ term
of the entire expression, we must have $a_1=c$. The coefficient for $X^2$ will then be $a-c+4/3c +a_2$, where $a$ is a nonnegative number which
is the sum of the coefficients of $X^2$ in $\si_0$ and $\si_2$. This implies $a_2 \leq -c/3$, which by using the fact that $\si_3$ is a sum of squares,
implies $a_3 \geq c/36$ (just consider a Hankel matrix for this sum of squares and analyze the submatrix indexed by $1$ and $x$).

Now checking the constant coefficient, we will have it to be $b-c/27 + 4a_3/3,$ where $b$ is the nonnegative constant term of $\si_0+\si_2$. Since this
must be $-\epsilon$, we have $-c/27 + 4a_3/3 < 0$ which since $a_3\geq c/36$ is impossible. Hence $\ell\notin \QM(\ul p')_1$, and $(0,0)$ is in $\sL(\ul p)_1$ as intended.
\end{proof}

\begin{figure}[ht]
\begin{center}
\hfill \includegraphics[scale=0.3]{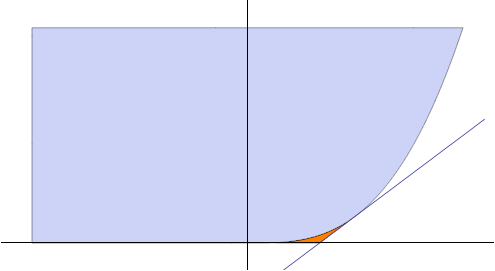} \hfill
\includegraphics[scale=0.22]{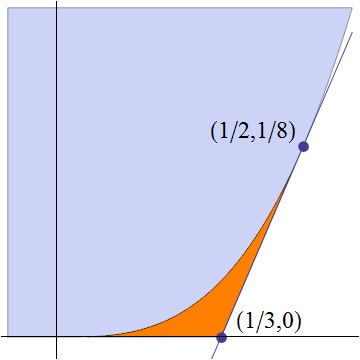}\hfill\
\caption{\small Comparison between $S$ and $\sL(\ul p)_1$, where  $p_1=Y,p_2=1-Y,p_3=1+X,p_4=Y-X^3$. Full region on the left, close up on the right.} \label{fig1}
\end{center}
\end{figure}

\begin{Cor}
For $p_1=Y,p_2=1-Y,p_3=1+X,p_4=Y-X^3$, $\sL(\ul p)_1$ has a non-exposed face at $(1/2,1/8)$ .
\end{Cor}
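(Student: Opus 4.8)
The plan is to read off the non-exposed face directly from Proposition \ref{counterexample}, which identifies $\sL(\ul p)_1$ with $C:=\conv(S\cup\{(1/3,0)\})$. First I would note that the point $(1/2,1/8)$ lies on the curve $Y=X^3$ and is therefore the unique common point of $S$ (hence of $C$) with the supporting line $\ell_{1/2}=Y-\tfrac34 X+\tfrac14=0$; so $F:=\{(1/2,1/8)\}$ is an exposed \emph{point} of $C$ by that particular hyperplane, but the claim is that it is a non-exposed \emph{face}, i.e.\ that it fails the defining condition of an exposed face for \emph{every} supporting affine-linear polynomial. So the real task is to show that no supporting hyperplane of $C$ meets $C$ in exactly $\{(1/2,1/8)\}$.

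The key geometric observation is the following. The boundary of $C$ near $(1/2,1/8)$ consists of the arc of $Y=X^3$ for $X\in[1/2,1]$ on one side, while on the other side — for $X$ slightly less than $1/2$ — the boundary of $C$ is the straight segment from $(1/3,0)$ to $(1/2,1/8)$, which is exactly the chord of the cubic through those two points (this is what it means for $(1/3,0)$ to have been added to the convex hull: the tangent lines $\ell_a$ with $a<1/2$ cut off that part of the cubic). Consequently $(1/2,1/8)$ is the junction point of a line segment and a strictly convex arc. I would argue: any affine-linear $\ell\ge 0$ on $C$ with $\ell(1/2,1/8)=0$ has gradient direction supporting $C$ at $(1/2,1/8)$; the set of supporting directions at a junction point of a segment $\Sigma$ and a $C^1$ arc meeting it is a nondegenerate cone of outer normals, and the extreme ray of that cone perpendicular to $\Sigma$ gives a supporting hyperplane that contains the \emph{whole} segment $\Sigma$, while all other supporting hyperplanes at the point are strictly below the cubic arc except at the single point. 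Hence $(1/2,1/8)$ never lies in a face that a supporting hyperplane exposes as exactly $\{(1/2,1/8)\}$; more precisely, I would identify the face $F_1:=\Sigma$ (the segment from $(1/3,0)$ to $(1/2,1/8)$), show $F=\{(1/2,1/8)\}\subsetneq F_1$ is a face of $C$, and show that every supporting hyperplane $H$ with $F\subseteq H$ actually contains $F_1$. That is precisely the negation of ``$F$ is exposed'' and exhibits $C=\sL(\ul p)_1$ with a non-exposed face.

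Concretely I would carry it out as: (1) write the supporting line at $(1/2,1/8)$ with outer normal making angle $\theta$ with the vertical and determine, by elementary calculus on $Y=X^3$, the range of $\theta$ for which the line stays below the cubic on $[1/2,1]$ and below the segment to $(1/3,0)$ — this range is a closed interval $[\theta_0,\theta_1]$ with $\theta_0<\theta_1$ because the left tangent direction (along the chord) and the right tangent direction (tangent to the cubic at $(1/2,1/8)$, slope $3/4$) differ; (2) observe that the extreme normal $\theta_0$ corresponds to the line through $(1/3,0)$ and $(1/2,1/8)$, i.e.\ every supporting hyperplane at $F$ with that normal contains the segment, and for $\theta\in(\theta_0,\theta_1]$ the line meets $C$ only at $(1/2,1/8)$; (3) conclude that for the face $F_1=$ that chord segment we have $F\subsetneq F_1$ and every supporting hyperplane through $F$ that could expose $F$ as a face must have normal $\theta_0$ and hence contains $F_1$, so $F$ is not exposed.

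The main obstacle is point (1): one has to be slightly careful in checking that the chord from $(1/3,0)$ to $(1/2,1/8)$ is genuinely part of $\partial C$ — equivalently that $(1/3,0)$ together with the arc $\{X\ge 1/2\}$ of the cubic really are the extreme points of $C$ in that region and that the segment lies below the cubic for $X\in(1/3,1/2)$ — and that the tangent line to $Y=X^3$ at $(1/2,1/8)$ does \emph{not} coincide with that chord (it does not, since the chord has slope $\tfrac{1/8-0}{1/2-1/3}=\tfrac{3}{4}$ hmm — I should double-check this: the chord slope equals $\tfrac{1/8}{1/6}=\tfrac34$, which equals the cubic's tangent slope $3X^2|_{X=1/2}=3/4$!). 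If those slopes coincide then $(1/2,1/8)$ would in fact be a smooth boundary point and the corollary would be false, so the genuine hard point — and the thing I must verify with care — is that the relevant chord is the one from $(1/3,0)$ to the \emph{tangency} point of $\ell_{1/2}$, which is at $a=1/2$, giving tangency point $(1/2,1/8)$, so one must instead locate the non-exposed face at the point where the added vertex $(1/3,0)$ meets the cubic arc, re-examining which tangent line $\ell_a$ first touches after the chord leaves $(1/3,0)$; this bookkeeping, comparing the chord slope out of $(1/3,0)$ with the family of tangent slopes $3a^2$, is where all the real content sits, and it is exactly what pins the non-exposed face at $(1/2,1/8)$.
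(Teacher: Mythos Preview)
Your overall strategy is sound and far more detailed than the paper's own proof (which reads in full: ``Immediate, from Figure~\ref{fig1}''). But you have the logic inverted at the crucial moment, and as written the argument does not close.

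First, a small slip: the line $\ell_{1/2}=Y-\tfrac34 X+\tfrac14$ is \emph{not} uniquely supported at $C$ in the single point $(1/2,1/8)$. Your parenthetical ``unique common point of $S$ (hence of $C$)'' fails because $(1/3,0)\in C\setminus S$ and $\ell_{1/2}(1/3,0)=0$; indeed the entire segment from $(1/3,0)$ to $(1/2,1/8)$ lies in $C\cap\{\ell_{1/2}=0\}$.

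Second, and more important: your computation that the chord from $(1/3,0)$ to $(1/2,1/8)$ has slope $3/4$, equal to the cubic's tangent slope at $(1/2,1/8)$, is correct --- but your conclusion ``If those slopes coincide then $(1/2,1/8)$ would in fact be a smooth boundary point and the corollary would be false'' is exactly backwards. The coincidence of slopes is the \emph{reason} the corollary is true. Because the boundary of $C$ is $C^1$ at $(1/2,1/8)$, the supporting line there is \emph{unique}, namely $\ell_{1/2}$, and that unique supporting line exposes the whole segment $F_1$ from $(1/3,0)$ to $(1/2,1/8)$. Thus the extreme point $F=\{(1/2,1/8)\}$ is a face strictly contained in $F_1$, and every supporting hyperplane through $F$ contains $F_1$: that is precisely the definition of $F$ being non-exposed. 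Conversely, in your hypothetical scenario $\theta_0<\theta_1$, your own step~(2) says that for $\theta\in(\theta_0,\theta_1]$ the supporting line meets $C$ only at $(1/2,1/8)$ --- which would make $F$ \emph{exposed}, contradicting the corollary. So the case you feared is the one you need.

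To repair the write-up: drop the claim that $\theta_0<\theta_1$, verify instead that the supporting line at $(1/2,1/8)$ is unique (because segment and arc share the tangent direction $3/4$), observe that this line contains the segment $F_1$, and conclude that the extreme point $(1/2,1/8)$ is a non-exposed face.
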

\begin{proof}
Immediate, from Figure \ref{fig1}.
\end{proof}

This shows that general Lasserre relaxations might have non-exposed faces, giving a negative answer to  Question \ref{q1} (ii).
In fact, this can happen even for very \textquotedblleft well-behaved\textquotedblright \ semialgebraic sets.
If in Proposition \ref{counterexample} we change the defining polynomials $\ul p$ to $\ul p'$ by replacing $Y$ with $Y-1/10$, we get a semialgebraic set  
that has only exposed faces (it can even be shown that $\sS(\ul p')=\sL(\ul p')_2$). However, our proof still works in this case, showing 
that $\sL(\ul p')_1=\sL(\ul p)_1 \cap \{(x,y)\mid y>1/10\}$ has a non-exposed face.

In the next proposition we show that when $\sS(\ul p)$ is not
convex, even if one of its Lasserre relaxations is tight (meaning $\sL(\ul p)_d=\overline{\conv(S)}$ for some $d$), $\sL(\ul p)_d$ might still have non-exposed faces.

\begin{Prop}\label{example2}
For $p:=-X^4-Y^4-2X^2Y^2+4X^2\in\R[X,Y]$ we find $$\sL(p)_2=\conv(\sS(p)).$$
\end{Prop}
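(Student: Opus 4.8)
The plan is to invoke Theorem~\ref{lasmain}(i): since $S:=\sS(p)$ is compact (hence $\overline{\conv(S)}=\conv(S)$), it is enough to show that $\QM(p)_2$ contains \emph{every} affine linear polynomial that is nonnegative on $S$.

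First I would describe the geometry. Writing $q_{\pm}:=\pm 2X-X^2-Y^2$, one has $p=-q_+q_-=4X^2-(X^2+Y^2)^2$, so $S=\sS(p)$ is the union of the two closed unit discs $D_{\pm}:=\sS(q_{\pm})$ centered at $(\pm1,0)$, which meet only at the origin. I claim the convex cone $\sS(p)^{\circ}$ of affine linear polynomials nonnegative on $S$ is generated by the two families
\[
  \ell_\phi := (1+\cos\phi)-\cos\phi\,X-\sin\phi\,Y,\qquad
  \ell_\phi^-:=\ell_\phi(-X,Y),\qquad \phi\in[-\pi/2,\pi/2],
\]
where $\ell_\phi$ is the supporting line of $D_+$ at $(1+\cos\phi,\sin\phi)$ (and, since $\cos\phi\ge 0$, it is also nonnegative on $D_-$). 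This is elementary: an affine $\ell=c-aX-bY$ is nonnegative on $S$ exactly when $c\ge |a|+\sqrt{a^2+b^2}$, and for $a\ge 0$ (the case $a\le 0$ being symmetric) one writes $\ell=\sqrt{a^2+b^2}\,\ell_\phi+(c-a-\sqrt{a^2+b^2})\cdot 1$ with $(\cos\phi,\sin\phi)=(a,b)/\sqrt{a^2+b^2}$, together with $1=\tfrac12\ell_{\pi/2}+\tfrac12\ell_{-\pi/2}$. Since $\QM(p)_2$ is a convex cone and is stable under the substitution $X\mapsto -X$ (which fixes $p$), it now suffices to prove $\ell_\phi\in\QM(p)_2$ for every $\phi\in[-\pi/2,\pi/2]$.

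The heart of the matter is the identity
\[
  \ell_\phi=\frac{\cos\phi}{2(1+\cos\phi)}\,(X-1-\cos\phi)^2
  +\tfrac12\,(Y-\sin\phi)^2
  +\frac{1}{8(1+\cos\phi)}\,\bigl(X^2+Y^2-2(1+\cos\phi)\bigr)^2
  +\frac{1}{8(1+\cos\phi)}\,p ,
\]
valid for all $\phi\in[-\pi/2,\pi/2]$. The multiplier $c_\phi=\tfrac1{8(1+\cos\phi)}$ of $p$ is forced: a degree count shows that in any representation $\ell_\phi=\sigma_0+\sigma_1p$ with $\sigma_0,\sigma_1\in\sum(\R[X,Y]_2)^2$ the factor $\sigma_1$ must be a constant (otherwise the leading form of $\sigma_1p$ has degree $\ge 5$), and this constant is pinned down by demanding that $\ell_\phi-c_\phi p$ have a critical point at the tangent point $(1+\cos\phi,\sin\phi)$, which is a common zero of $\ell_\phi$ and $p$. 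One then checks — either by expanding both sides, or by translating the origin to $(1+\cos\phi,\sin\phi)$ and completing the square, which makes $X^2+Y^2-2(1+\cos\phi)$ appear naturally — that $\ell_\phi-c_\phi p$ is exactly the displayed sum of the three squared terms. Because $\cos\phi\ge 0$ and $1+\cos\phi\ge 1$ on this range, all three coefficients are nonnegative and all three squared expressions lie in $\R[X,Y]_2$; hence the right-hand side exhibits $\ell_\phi$ as an element of $\sum(\R[X,Y]_2)^2+\sum(\R[X,Y]_2)^2\cdot p=\QM(p)_2$.

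Putting the pieces together: $\QM(p)_2$ contains every affine linear polynomial nonnegative on $S$, so Theorem~\ref{lasmain}(i) gives $\sL(p)_2=\overline{\conv(S)}=\conv(S)$. I expect the main obstacle to be finding — and then transparently verifying — the sum-of-squares identity above; the reduction to the one-parameter family $\ell_\phi$ and the observation that a valid $p$-multiplier must be constant are routine once the two-disc picture is in hand.
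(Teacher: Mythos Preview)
Your proof is correct and is essentially the paper's own argument. The paper also observes that $S$ is the union of the two unit discs centred at $(\pm1,0)$, uses the $X\mapsto -X$ symmetry to reduce to tangent lines of one circle, and then writes down the very same sum-of-squares certificate: after the reparametrisation $\theta=\pi-\phi$ (left circle instead of right) and clearing the denominator $8(1+\cos\phi)$, your identity becomes exactly the paper's displayed equality
\[
(8-8\cos\theta)\,\ell_\theta \;=\; p+(X^2+Y^2-2+2\cos\theta)^2+4(1-\cos\theta)(Y-\sin\theta)^2+4(-\cos\theta)(X-\cos\theta+1)^2.
\]
Your write-up adds a bit of useful scaffolding the paper leaves implicit --- the explicit description of $\sS(p)^\circ$ and the remark that the $p$-multiplier in any representation must be a nonnegative constant --- but the method and the key identity are the same.
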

\begin{proof}
The set $S=\sS(p)$ is the union of two disks of radius $1$ with centers $(-1,0)$ and $(1,0)$. By symmetry, it is enough to show that any linear polynomial
tangent to the left circle and non-negative on both disks belongs to $\QM(p)_2$. The points on the left circle that
are on the boundary of $\conv(S)$ are of the form $z_{\theta}:=(\cos(\theta)-1,\sin(\theta))$, for some $\theta \in[\pi/2,3\pi/2]$, 
and an affine linear polynomial $\ell_{\theta}$ defining the tangent to $z_{\theta}$ such that $\ell_{\theta} \geq 0$ on $S$ is given by 
$\ell_{\theta}=1-\cos(\theta)-\cos(\theta)X -\sin(\theta)Y$. Since $\cos(\theta)\leq 0$
 it is enough to check that the equality
\begin{equation}\label{eq2circ}
\begin{array}{ll}
 (8-8\cos(\theta))\ell_{\theta} &  = p +(X^2+Y^2-2+2\cos(\theta))^2 + \\
&   \hspace{.5cm}+\left(2\sqrt{1-\cos(\theta)}(Y-\sin(\theta))\right)^2+ \\
& \hspace{.5cm} + \left(2\sqrt{-\cos(\theta)}(X-\cos(\theta)+1)\right)^2
\end{array}
\end{equation}
holds, thus proving the result.
\end{proof}

\begin{figure}[ht]
\begin{center}
\includegraphics[scale=0.3]{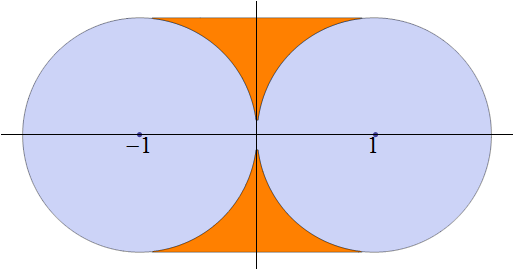}
\caption{\small Comparison between $S$ and $\sL(p)_2=\conv(S)$, where \newline $p=-X^4-Y^4-2X^2Y^2+4X^2$.} \label{fig2}
\end{center}
\end{figure}

\begin{Cor}
For $p=-X^4-Y^4-2X^2Y^2+4X^2$, $\sL(p)_2=\conv(\sS(p))$ has a non-exposed face.
\end{Cor}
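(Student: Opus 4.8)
The plan is to exhibit the non-exposed face explicitly. By Proposition \ref{example2} we know $\sL(p)_2=\conv(\sS(p))$, and $\sS(p)$ is the union of the two closed unit disks centered at $(\pm 1,0)$. First I would identify the candidate face: the two disks meet only at the origin $(0,0)$, so $\{(0,0)\}$ is a face of the convex hull (it is the unique point in common, hence extremal). I would verify this directly: any way of writing $(0,0)=\la x+(1-\la)y$ with $x,y\in\conv(\sS(p))$, $\la\in(0,1)$ forces $x=y=(0,0)$, because $\conv(\sS(p))$ lies in the slab $-2\le X_1$-component and the origin is a ``pinch point'' of the boundary; a clean way is to note that the two supporting lines $X=0$ from the two disks coincide, so the boundary has a corner at the origin.

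Next I would show this face is not exposed. A face is exposed iff it equals $\{x\in\conv(\sS(p))\mid \ell(x)=0\}$ for some affine linear $\ell\ge 0$ on the set. Suppose such an $\ell$ exists with zero set exactly $\{(0,0)\}$. Since $\ell$ vanishes at the origin and is nonnegative on both disks, $\ell$ must be a supporting functional at $(0,0)$ for each disk separately; but the only line through the origin that has an entire unit disk centered at $(\pm1,0)$ on its nonnegative side is the vertical line $X=0$, i.e. $\ell$ is a scalar multiple of $X$ (up to sign one gets $\ell = cX$ with the two disks on opposite sides — contradiction) — more precisely, the supporting half-plane for the left disk at the origin is $\{X\le 0\}$ and for the right disk it is $\{X\ge 0\}$, and these are the only supporting half-planes at the origin for the respective disks. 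Hence no single $\ell$ can be simultaneously nonnegative on $\conv(\sS(p))$ and cut out exactly the origin; its zero set would have to contain a boundary arc of one of the circles. Therefore $\{(0,0)\}$ is a face that is not exposed.

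Alternatively, and perhaps more in the spirit of the preceding corollary, I would simply invoke Figure \ref{fig2}: the picture shows that $\conv(\sS(p))$ has a corner at the origin where the two circular arcs meet tangentially to the vertical line from opposite sides, and such a corner point is a non-exposed vertex. The main (and only real) obstacle is the clean verification that no supporting line at the origin exposes just that point — which reduces to the elementary observation that each unit disk centered at $(\pm1,0)$ has a \emph{unique} supporting line through the origin, namely $X=0$, and these two half-planes point in opposite directions, so their common zero locus along the boundary of the convex hull is forced to be larger than a point for any genuinely supporting $\ell$; hence the origin is a face not arising as $\{\ell=0\}$.

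\begin{proof}
By Proposition \ref{example2} we have $\sL(p)_2=\conv(\sS(p))=C$, where $\sS(p)$ is the union of the two closed unit disks $D_{\pm}$ centered at $(\pm1,0)$. These disks intersect in the single point $(0,0)$, which is therefore an extreme point of $C$: if $(0,0)=\la x+(1-\la)y$ with $x,y\in C$ and $\la\in(0,1)$, then since $C\subseteq\{X\ge -2\}\cap\{X\le 2\}$ and the only point of $C$ on the line $X=0$ is the origin (the vertical line $X=0$ supports $D_-$ from the right and $D_+$ from the left, meeting each circle only at the origin), we get $x=y=(0,0)$. Hence $\{(0,0)\}$ is a face of $C$.

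This face is not exposed. Suppose there were an affine linear $\ell$, nonnegative on $C$, with $\{x\in C\mid \ell(x)=0\}=\{(0,0)\}$. Then $\ell\ge 0$ on $D_-$ and $\ell(0,0)=0$, so the line $\{\ell=0\}$ is a supporting line of the unit disk $D_-$ at the boundary point $(0,0)$; the unique such line is $X=0$. The same argument with $D_+$ again forces $\{\ell=0\}$ to be the line $X=0$. But then $\{x\in C\mid\ell(x)=0\}\supseteq C\cap\{X=0\}=\{(0,0)\}$ only if $\ell$ is a nonzero multiple of $X$; and $X$ is not nonnegative on $C$, since $C$ contains points with $X<0$. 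This contradiction shows that no such $\ell$ exists, so $\{(0,0)\}$ is a non-exposed face of $\sL(p)_2$. See Figure \ref{fig2}.
\end{proof}
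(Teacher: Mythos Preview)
Your proof has a genuine error: the point $(0,0)$ is \emph{not} a face of $C=\conv(\sS(p))$; it lies in the interior of $C$. You have confused $\sS(p)$ with its convex hull. It is true that $\sS(p)\cap\{X=0\}=\{(0,0)\}$, but $C\cap\{X=0\}$ is the full vertical segment from $(0,-1)$ to $(0,1)$: for instance $(0,1)=\tfrac12(-1,1)+\tfrac12(1,1)$ with $(-1,1)\in D_-$ and $(1,1)\in D_+$. More directly, $(0,0)=\tfrac12(-2,0)+\tfrac12(2,0)$ with both endpoints in $\sS(p)$. So your ``extremality'' verification fails at the very first step, and the subsequent argument about supporting lines is moot.

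The convex hull of the two unit disks centered at $(\pm1,0)$ is a stadium: its boundary consists of the two half-circles $\{(x,y):(x\mp1)^2+y^2=1,\ \pm x\ge 1\}$ joined by the two horizontal segments $[-1,1]\times\{\pm1\}$. The paper's proof points instead to the four points $(\pm1,\pm1)$, where a circular arc meets a straight segment tangentially. Each of these is an extreme point, and the unique supporting line through, say, $(1,1)$ is $Y=1$; but $\{Y=1\}\cap C$ is the whole top edge $[-1,1]\times\{1\}$, not just $(1,1)$. Hence $(1,1)$ is a non-exposed face. If you want to salvage your write-up, replace $(0,0)$ by one of these four corner points and rerun the supporting-line argument there.
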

\begin{proof}
Just note that the four points $(\pm 1, \pm 1)$ are all non-exposed faces of $\sL(p)_2=\conv(\sS(p))$ as it can be seen in Figure \ref{fig2}.
\end{proof}

Note that the proof of Proposition \ref{example2} not only completes the answer to Question \ref{q1} (i), but also answers Question \ref{q2}. 
Our representation \eqref{eq2circ} shows that if we consider the ideal $I=\left<p\right>$, then $\TH(I)_2=\conv(V_{\R}(I))$ has 
non-exposed faces.

\section{A Positivstellensatz for projections of spectrahedra}\label{third}

In this section we describe a quadratic module that is assigned to the projection of a spectrahedron. This quadratic module will in general not be finitely generated, but still its elements can be described almost constructively. The module will turn out to be archimedean whenever the set is bounded, and it will thus provide us with a Positivstellensatz for projections of spectrahedra. This is in particular interesting since such projections are usually not basic closed semialgebraic. So none from the large amount of  present Positivstellens\"atze applies to this setup.

 Another interesting feature of this quadratic module is that it establishes a counterpart to Lasserre's theorem above. Recall that the existence of a \textit{finitely generated} quadratic module containing all nonnegative linear polynomials with a degree bound on the sums of squares is only sufficient, but not necessary for a set to be the projection of a spectrahedon. The module that we will construct, however, contains all nonnegative linear polynomials in a certain truncated part. So if we broaden the class of quadratic modules from finitely generated ones to a certain bigger class, then the bounded degree representation property from Lasserre's theorem becomes equivalent to representability  of a set as the projection of a spectrahedron.

So let $\sA(\ul X,\ul Y)=A+X_1B_1+\cdots+X_nB_n+Y_1C_1+\cdots+Y_mC_m$ be a strictly feasible $k$-dimensional linear matrix polynomial. Let $\widetilde{S}\subseteq\R^{n+m}$ be the spectrahedron defined by $\sA$, and $S=\pr(\widetilde{S})\subseteq\R^n$ its projection. We will write $\sA'(\ul X)$ for $\sA(\ul X,0)$. 

Recall that any linear polynomial $\ell\in\R[\ul X]_1$ that is nonnegative on $S$ is of the form $$\ell = U\circ \sA'(\ul X) + r ,$$ with some $r\geq 0$ and  a positive semidefinite $k\times k$-matrix $U$ that fulfills $U\circ B_i=0$ for all $i=1,\ldots, m$. This is precisely the statement of Proposition \ref{nem}. By Cholesky decomposition of $U$ this is the same as saying $$\ell = \sum_j v_j^t\sA'(\ul X)v_j +r $$ for finitely many vectors $v_j\in\R^k$ fulfilling $\sum_j v_j^tB_i v_j=0$ for all $i=1,\ldots, m$. 

If we now want to construct a quadratic module containing all the nonnegative linear polynomials on $S$, we can use polynomial vectors $q_j$  instead of real vectors $v_j$ only. Formally, define \begin{align*} \QM(\sA):=\left\{ \sum_j q_j^t \sA'(\ul X) q_j+ \si\mid \right. q_j\in\R[\ul X]^k, & \sum_j q_j^tB_i q_j=0 \mbox{ for } i=1,\ldots m, \\ &\left. \si\in\sum\R[\ul X]^2\right\}.\end{align*}  
 Clearly $\QM(\sA)$ is a quadratic module. The following main result now follows easily. In the case of a bounded set $S$ it provides the announced Positivstellensatz.
 
 \begin{Thm} \label{pos}$\QM(\sA)$ contains only polynomials that are nonnegative on $S$, and the set of points in $\R^n$ where all elements from $\QM(\sA)$ are nonnegative is precisely $\overline{S}$. If $S$ is bounded then $\QM(\sA)$ is archimedean, and thus contains all polynomials $p$ with $p> 0 $ on $\overline{S}$.
 \end{Thm}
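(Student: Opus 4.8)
The plan is to verify the three assertions in turn, the first two being quick and the archimedean property being the real work. First I would check that every element of $\QM(\sA)$ is nonnegative on $S$: if $x\in S$, pick $y\in\R^m$ with $\sA'(x)+\sum_j y_j C_j = \sA(x,y)\succeq 0$; then for each generator $\sum_j q_j^t\sA'(\ul X)q_j$ with $\sum_j q_j^t B_i q_j = 0$ — wait, one must be careful about which linear constraint is imposed. Looking at the statement, the constraint is $\sum_j q_j^t B_i q_j = 0$; but to use feasibility at $(x,y)$ one wants the $C_j$-directions killed, so I would re-read and align the notation, using that $\sum_j q_j(x)^t \sA(x,y) q_j(x) \ge 0$ and the vanishing of the cross terms makes this equal $\sum_j q_j(x)^t \sA'(x) q_j(x)$. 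Adding a sum of squares $\si(x)\ge 0$ keeps nonnegativity, so $\QM(\sA)\subseteq \{f : f\ge 0 \text{ on } S\}$, hence also on $\overline S$ by continuity.

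For the second assertion, I must show that if $x\notin\overline S$ then some element of $\QM(\sA)$ is negative at $x$. Since $\overline S$ is closed convex (it is the closure of the convex set $S$), there is an affine linear $\ell\in\R[\ul X]_1$ with $\ell\ge 0$ on $\overline S$ but $\ell(x)<0$. By Proposition \ref{nem} applied to the strictly feasible $\sA$ (and using that linear polynomials nonnegative on $S$ are the same as those nonnegative on $\overline S$), $\ell = U\circ\sA'(\ul X) + r$ with $U\succeq 0$, $r\ge 0$, and $U\circ C_j = 0$ for all $j$; Cholesky-decomposing $U=\sum_j v_j v_j^t$ with constant vectors $v_j\in\R^k$ gives $\ell = \sum_j v_j^t\sA'(\ul X)v_j + r$ with $\sum_j v_j^t C_j v_j = 0$, so $\ell\in\QM(\sA)$ (taking the $q_j$ to be the constant vectors $v_j$ and $\si = r$). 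This $\ell$ witnesses $x\notin\{f\in\QM(\sA):\ f\ge 0\text{ at the point}\}$. Combined with the first part, the common nonnegativity locus of $\QM(\sA)$ is exactly $\overline S$.

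For the third assertion, suppose $S$ is bounded, so $\overline S\subseteq [-N,N]^n$ for some $N$. I need $N^2 - X_i^2 \in \QM(\sA)$ for each $i$ (or $N - X_i$ and $N+X_i$, which then give $N^2-X_i^2$ as well), since these generate an archimedean module together with $\R[\ul X]$. Now $N\pm X_i$ is an affine linear polynomial nonnegative on $\overline S$, so by exactly the argument of the previous paragraph it lies in $\QM(\sA)$. Hence $\QM(\sA)$ is archimedean. Then Putinar's Positivstellensatz (or rather its quadratic-module form: an archimedean quadratic module contains every polynomial strictly positive on its nonnegativity set) applies directly — note $\QM(\sA)$ need not be finitely generated, but the archimedean-module version of Putinar's theorem does not require finite generation, only that the module be archimedean — and yields $p\in\QM(\sA)$ for every $p$ with $p>0$ on $\overline S$.

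\textbf{Main obstacle.} The only genuinely delicate point is the invocation of Putinar's theorem for a module that is not finitely generated: I would make sure to cite it in the form "every archimedean quadratic module $M$ in a commutative ring contains all elements positive on $\{x: f(x)\ge 0\ \forall f\in M\}$," which holds without finite generation. The rest is bookkeeping: matching the $B_i$/$C_j$ roles in the constraint defining $\QM(\sA)$ against the projection direction in Proposition \ref{nem}, and confirming that affine linear polynomials nonnegative on $S$ coincide with those nonnegative on $\overline S$ (immediate, since they are continuous). Everything else reduces to the separation-plus-Cholesky argument already packaged in Proposition \ref{nem}.
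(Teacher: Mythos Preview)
Your proposal is correct and follows essentially the same route as the paper: nonnegativity on $S$ via $\sum_j q_j^t\sA(\ul X,\ul Y)q_j\ge 0$ on $\widetilde S$ (your ``cross terms vanish'' computation is exactly the paper's one-line remark), the reverse inclusion via Proposition~\ref{nem} and Cholesky to put every nonnegative affine $\ell$ into $\QM(\sA)$, and then $N\pm X_i\in\QM(\sA)$ for archimedeanity. Two small remarks: your hesitation about the $B_i$/$C_j$ constraint is well-founded---the paper's definition of $\QM(\sA)$ writes ``$\sum_j q_j^t B_i q_j=0$ for $i=1,\ldots,m$'' but this is a typo for $C_i$, as both Proposition~\ref{nem} and your own argument make clear; and for the final step the paper cites Jacobi's representation theorem rather than Putinar, which is precisely the form you want since it applies to arbitrary (not necessarily finitely generated) archimedean quadratic modules.
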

 \begin{proof} The first statements follows immediately from the fact that each element from $\QM(\sA)$ is in particular of the form $$\sum_j q_j^t\sA(\ul X,\ul Y)q_j + \si,$$ and from the definition of $S$. The second statement is then clear from the fact that all nonnegative linear polynomials are contained in $\QM(\sA)$. In the case of a bounded set $S$ we  have $N\pm X_i\in\QM(\sA)$ for all $i$ and some sufficiently large number $N$. As for example explained in Marshall \cite{MR2383959}, Corollary 5.2.4, $\QM(\sA)$ is archimedean. Then Jacobi's Representation Theorem \cite[Theorem 4]{MR1838311}  implies the statement about strictly positive polynomials.
 \end{proof}

Note that in case of a spectrahedron,  Helton,  Klep, and McCullough \cite{hkm} have also proven $\QM(\sA)$ to be archimedean, using results about completely positive maps. They use this to obtain a Positivstellensatz for matrix polynomials, see their Theorem 1.3. 

Note also that in our result we can not expect $\QM(\sA)$ to be a finitely generated quadratic module in general. This would imply that $\overline{S}$ is basic closed semi-algebraic, i.e. defined by finitely many simultaneous polynomial inequalities. This is clearly not true for all projections of spectrahedra.

\begin{Ex} Consider the example  from Proposition \ref{example2}, the convex hull of two disks in the plane. In contrast to the above example, we denote by $S$ the full convex hull. Note that $S$ is an example of a closed semi-algebraic set that is not basic closed. Since $S$ is the union of disks shifted along the $x$-axis, one immediately checks that it has the following representation: $$S=\{(x,y)\in\R^2\mid \exists z\in [-1,1]\ (x-z)^2+y^2\leq 1\}.$$
 The defining condition of $S$ can now be stated as positive semidefiniteness of the following linear matrix polynomial:

 $$ \left(\begin{matrix}1 & 0 & 0 & 0 \\0 & 1 & 0 & 0 \\0 & 0 & 1 & 0 \\0 & 0 & 0 & 1\end{matrix}\right)  + X \left(\begin{matrix}0 & 1 & 0 & 0 \\1 & 0 & 0 & 0 \\0 & 0 & 0 & 0 \\0 & 0 & 0 & 0\end{matrix}\right) + Y \left(\begin{matrix}1 & 0 & 0 & 0 \\0 & -1 & 0 & 0 \\0 & 0 & 0 & 0 \\0 & 0 & 0 & 0\end{matrix}\right) +Z \left(\begin{matrix}0 & -1 & 0 & 0 \\-1 & 0 & 0 & 0 \\0 & 0 & 1 & 0 \\0 & 0 & 0 & -1\end{matrix}\right) $$

So by Theorem \ref{pos}, every polynomial that is strictly positive on $S$ is a sum of squares plus a polynomial of the following form:   $$\sum_j q_1^{(j)^2} +q_2^{(j)^2}+q_3^{(j)^2}+q_4^{(j)^2}+ X(q_3^{(j)^2}-q_4^{(j)^2}) + Y(q_1^{(j)^2}-q_2^{(j)^2}),$$ where $q_i^{(j)}\in\R[X,Y]$ with $\sum_j 2q_1^{(j)}q_2^{(j)}-q_3^{(j)^2}+q_4^{(j)^2}=0$.

\end{Ex}

We now turn to the announced counterpart of Lasserre's Theorem. %All of the following observations are also true for $\widehat{\QM}(\sA)$, but we restrict ourselves to considering $\QM(\sA)$. 
First consider the following truncated part of $\QM(\sA)$:  \begin{align*} \QM(\sA)_d:=\left\{ \sum_j q_j^t \sA'(\ul X) q_j+ \si\mid \right. q_j\in(\R[\ul X]_d)^k, & \sum_j q_j^tB_i q_j=0 \mbox{ for } i=1,\ldots m, \\ &\left. \si\in\sum\R[\ul X]_d^2\right\}.\end{align*}  

\begin{Lemma}\label{trunc}
Each $\QM(\sA)_d$ lives in a finite dimensional subspace of $\R[\ul X]$ and is the projection of a spectrahedron.
\end{Lemma}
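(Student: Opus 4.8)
The plan is to exhibit $\QM(\sA)_d$ explicitly as the image of a spectrahedron under a linear (in fact affine) projection, mimicking the argument used earlier to show that each $\QM(\ul p)_d$ is a projection of a spectrahedron. First I would observe that the degree bound forces everything into a fixed finite-dimensional space: if $q_j\in(\R[\ul X]_d)^k$, then each product $q_j^tB_iq_j$ and $q_j^t\sA'(\ul X)q_j$ lies in $\R[\ul X]_{2d+1}$, and each $\si\in\sum\R[\ul X]_d^2$ lies in $\R[\ul X]_{2d}\subseteq\R[\ul X]_{2d+1}$. Hence $\QM(\sA)_d\subseteq\R[\ul X]_{2d+1}$, which is finite dimensional; this settles the first assertion.

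For the second assertion I would parametrize. A finite sum $\sum_j q_j^t\sA'(\ul X)q_j$ with $q_j\in(\R[\ul X]_d)^k$ can be rewritten: collecting the $q_j$ as the columns (or rather, writing $q_j=(q_{j1},\dots,q_{jk})^t$ with each $q_{ji}\in\R[\ul X]_d$), the quantity $\sum_j q_j^tMq_j$ for a fixed symmetric matrix $M=(m_{ab})$ equals $\sum_{a,b}m_{ab}\sum_j q_{ja}q_{jb}$, which depends only on the Gram-type data $G_{ab}:=\sum_j q_{ja}q_{jb}\in\R[\ul X]_{2d}$. The matrix $G=(G_{ab})_{a,b=1}^k$ is a $k\times k$ matrix with polynomial entries of degree $\le 2d$, and the condition that $G$ arises as $\sum_j q_jq_j^t$ for finitely many real polynomial vectors $q_j$ is exactly the condition that $G$, viewed through a fixed monomial basis of $\R[\ul X]_d$, is a sum of hermitian squares, equivalently that $G$ is ``positive semidefinite as a matrix polynomial'' in the appropriate sense; concretely, write each $q_{ja}=\sum_\al c_{ja}^{(\al)}X^\al$ over monomials $X^\al$ of degree $\le d$, so $G_{ab}=\sum_{\al,\beta}\big(\sum_j c_{ja}^{(\al)}c_{jb}^{(\beta)}\big)X^{\al+\beta}$, and the array $\big(\sum_j c_{ja}^{(\al)}c_{jb}^{(\beta)}\big)$ indexed by pairs $(a,\al)$ is a genuine positive semidefinite real matrix $W\succeq 0$. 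Thus $G$ is linearly determined by a PSD matrix $W$ of fixed size $kN\times kN$ (where $N=\dim\R[\ul X]_d$). The constraints $\sum_j q_j^tB_iq_j=0$ translate into $\sum_{a,b}(B_i)_{ab}G_{ab}=0$, i.e.\ finitely many linear equations on the entries of $G$, hence finitely many linear equations on the entries of $W$. Likewise $\sum_j q_j^t\sA'(\ul X)q_j=\sum_{a,b}\sA'(\ul X)_{ab}G_{ab}$ is a fixed linear function of $W$ with values in $\R[\ul X]_{2d+1}$; and the sum of squares term $\si$ is, by the analogous Gram matrix description, the image of another PSD matrix $W'$ under a fixed linear map into $\R[\ul X]_{2d}$. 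Putting this together, $\QM(\sA)_d$ is the image of the spectrahedral cone $\{(W,W')\succeq 0 : \ell\text{-equations hold}\}$ under a fixed linear map into the finite-dimensional space $\R[\ul X]_{2d+1}$. Intersecting with the affine slice picking out a specific target polynomial, this exhibits $\QM(\sA)_d$ as the projection of a spectrahedron, exactly in the spirit of Theorem \ref{rago} and the discussion following it.

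I do not expect a serious obstacle here; this is essentially the standard Gram-matrix / moment-matrix bookkeeping, the only care needed being that the ``cross terms'' between distinct polynomial vectors $q_j$ are correctly absorbed into a single PSD Gram matrix (so that the representation is genuinely a spectrahedral projection and not merely a projection of a union of such), and that the linear constraints $\sum_j q_j^tB_iq_j=0$ are homogeneous-linear in the Gram data and therefore cut the spectrahedral cone down to another spectrahedral cone. Alternatively, one can phrase the whole thing as: each $\QM(\sA)_d$ is the convex hull of the image of an explicit quadratic map (sending the coefficient vectors of the $q_j$ and of the square roots appearing in $\si$ to the corresponding polynomial in $\R[\ul X]_{2d+1}$, restricted to the linear variety where the $B_i$-conditions hold), and then invoke Theorem \ref{rago} directly, just as was done for $\QM(\ul p)_d$.
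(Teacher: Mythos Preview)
Your proposal is correct and takes essentially the same approach as the paper: the paper introduces the matrix $M=\sum_j q_jq_j^t$ (your $G$), notes that the $B_i$-constraints and the output $\sum_j q_j^t\sA'(\ul X)q_j$ are linear in $M$, bounds the number of summands by a fixed $N$ via Caratheodory, and then applies Theorem~\ref{rago} to the quadratic map $(p_1,\dots,p_N)\mapsto\sum_j p_jp_j^t$ --- this is exactly the alternative you sketch in your final paragraph. Your primary route via the explicit $kN\times kN$ Gram matrix $W$ is an equivalent and slightly more hands-on rephrasing, with Caratheodory hidden in the fixed size of $W$.
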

\begin{proof} It is clear that $\QM(\sA)_d$ lives in a finite dimensional subspace. Now for finitely many $k$-tuples $q_1,\ldots, q_r$ of polynomials consider the $k\times k$-matrix polynomial $$M= q_1q_1^t+\cdots+ q_rq_r^t.$$ The condition $\sum_j q_j^tB_iq_j=0$ translates to $B_i\circ M=0,$ and $$\sum_j q_j^t\sA'(\ul X)q_j = \sA'(\ul X)\circ M.$$ If the degree of all components of the $q_i$ is bounded by $d$, then the degree of each entry of $M$ is bounded by $2d$, and thus $M$ can be written as a sum $$M=p_1p_1^t + \cdots +p_Np_N^t,$$ with some $N$ depending on $d$ and $k$, but not on $r$. This follows from Caratheodory's Theorem. Now consider the quadratic mapping  \begin{align*}\Psi\colon ((\R[\ul X]_d)^k )^N &\rightarrow \M_k(\R[\ul X]_{2d}) \\ (p_1,\ldots,p_N)&\mapsto \sum_j p_jp_j^t.\end{align*} Its image is a convex cone, and the projection of a spectrahedron by Theorem \ref{rago}. So intersecting with the linear subspace defined by $M\circ B_i=0$ for $i=1\ldots,m$ and applying the linear map $M\mapsto \sA'(\ul X)\circ M$ still gives the projection of a spectrahedron. After taking the convex hull with $\sum\R[\ul X]_d^2$ we obtain $\QM(\sA)_d$, still the projection of a spectrahedron.
\end{proof}

\begin{Thm}\label{genla} Let $S\subseteq\R^n$ be a set such that $\overline{\conv(S)}$ has nonempty interior. Then the following are equivalent:
\begin{itemize}
\item[(i)] $\overline{\conv(S)}$ is the projection of a spectrahedron.
\item[(ii)] There is a quadratic module $Q\subseteq\R[\ul X]$ with the properties
\begin{itemize}
\item[$\bullet$] $Q$ contains only polynomials nonnegative on $S$
\item [$\bullet$] $Q=\cup_{d\in\N} Q_d$, where $Q_d\subseteq Q_{d+1}$ and each $Q_d$ lives in a finite dimensional subspace of $\R[\ul X]$ and is the projection of a spectrahedron.
\item[$\bullet$] There is some $d^*$ such that $Q_{d^*}$ contains every affine linear polynomial that is nonnegative on $S$.
\end{itemize}
\end{itemize}
\end{Thm}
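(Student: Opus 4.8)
The plan is to prove the two implications separately, with the harder direction being (i)$\Rightarrow$(ii), for which the quadratic module $\QM(\sA)$ constructed just above is the natural candidate. First I would dispose of (ii)$\Rightarrow$(i): given such a module $Q$, I want to show $\overline{\conv(S)}$ is the projection of a spectrahedron. The key observation is that since $Q_{d^*}$ is the projection of a spectrahedron and lives in a finite dimensional subspace, the set $Q_{d^*}\cap\R[\ul X]_1$ of affine linear polynomials in $Q_{d^*}$ is again the projection of a spectrahedron (intersection of a projected spectrahedron with a linear subspace). By the third bullet this set contains every affine linear polynomial nonnegative on $S$, and by the first bullet it contains \emph{only} polynomials nonnegative on $S$; hence it is exactly the cone of affine linear polynomials nonnegative on $S$. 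Then, exactly as in Subsection \ref{second} (the argument following Proposition \ref{pol} showing $\sL(\ul p)_d$ is a projected spectrahedron), the set $\{x\in\R^n\mid \ell(x)\ge 0\ \text{for all } \ell\in Q_{d^*}\cap\R[\ul X]_1\}$ is the projection of a spectrahedron, and this set is precisely $\overline{\conv(S)}$.

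For (i)$\Rightarrow$(ii), the idea is: if $\overline{\conv(S)}$ is the projection of a spectrahedron, then since it has nonempty interior, by Lemma \ref{inter} together with Corollary 5 of \cite{MR1342934} it is the projection of a spectrahedron $\widetilde T\subseteq\R^{n+m}$ defined by a \emph{strictly feasible} linear matrix polynomial $\sA(\ul X,\ul Y)$. I would then take $Q:=\QM(\sA)$ as defined above, and $Q_d:=\QM(\sA)_d$. By Theorem \ref{pos}, $\QM(\sA)$ contains only polynomials nonnegative on $\overline{\conv(S)}$, hence on $S$, giving the first bullet. By Lemma \ref{trunc}, each $\QM(\sA)_d$ lives in a finite dimensional subspace and is the projection of a spectrahedron; the chain condition $Q_d\subseteq Q_{d+1}$ is immediate from the definition (every $q_j\in(\R[\ul X]_d)^k$ is in $(\R[\ul X]_{d+1})^k$, and similarly for the sum-of-squares term), and $\cup_d Q_d=Q$ is clear since any representation uses polynomials of some bounded degree. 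This gives the second bullet.

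The third bullet is the main point. Here I would invoke the description of nonnegative linear polynomials on $S=\pr(\widetilde S)$ recalled just before the definition of $\QM(\sA)$: by Proposition \ref{nem}, any $\ell\in\R[\ul X]_1$ nonnegative on $\overline{\conv(S)}$ (equivalently on $S$) has the form $\ell=\sum_j v_j^t\sA'(\ul X)v_j+r$ with finitely many \emph{constant} vectors $v_j\in\R^k$ satisfying $\sum_j v_j^t B_i v_j=0$ for all $i$, and $r\ge 0$. Taking $q_j:=v_j$ (constant polynomials, so of degree $0\le d^*$) and $\si:=r$ (a constant, hence a square, of degree $0$), this is literally an element of $\QM(\sA)_{d^*}$ for \emph{any} $d^*\ge 0$ — so even $d^*=0$ works, though it suffices to note existence. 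Hence $Q_{d^*}$ contains every affine linear polynomial nonnegative on $S$. The one subtlety to be careful about is that "nonnegative on $S$" and "nonnegative on $\overline{\conv(S)}$" coincide for affine linear polynomials, which is clear since $\ell\ge 0$ on $S$ iff $\ell\ge 0$ on $\conv(S)$ iff $\ell\ge 0$ on its closure. I expect no real obstacle here; the theorem is essentially a repackaging of Proposition \ref{nem}, Lemma \ref{trunc}, and Theorem \ref{pos}, and the work has all been done. The only thing demanding a line of care is checking that the bound $d^*$ can be chosen uniformly — which, as just noted, is trivial because constant vectors suffice regardless of the degree of $\ell$.
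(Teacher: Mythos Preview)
Your proposal is correct and follows essentially the same route as the paper's own proof: for (ii)$\Rightarrow$(i) the paper also intersects $Q_{d^*}$ with $\R[\ul X]_1$ and invokes the argument from Subsection~\ref{second}, and for (i)$\Rightarrow$(ii) it likewise passes to a strictly feasible description via Lemma~\ref{inter}, takes $Q:=\QM(\sA)$ with $Q_d:=\QM(\sA)_d$, and observes that $d^*=0$ suffices by Proposition~\ref{nem}. Your write-up is simply more explicit about why the three bullets hold and about the equivalence of ``nonnegative on $S$'' and ``nonnegative on $\overline{\conv(S)}$'' for affine linear polynomials, but there is no substantive difference in strategy.
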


\begin{proof} For "(ii)$\Rightarrow$(i)" consider the set $Q_{d^*}\cap\R[\ul X]_1$ in $\R[\ul X]_1$. It is the projection of a spectrahedron, and so \begin{align*}\overline{\conv(S)}&=\{x\in\R^n\mid \ell(x)\geq 0 \mbox{ for all } \ell\in Q_{d^*}\cap\R[\ul X]_1\}, \end{align*} is  also the projection of a spectrahedron, as explained above.

For "(i)$\Rightarrow$(ii)" let $\widetilde{S}\subseteq\R^{n+m}$ be a spectrahedron with nonempty interior that projects to $\overline{\conv(S)}$. Let $\sA(\ul X,\ul Y)$ be a strictly feasible linear matrix polynomial defining $\widetilde{S}$. Then consider the quadratic module $Q:=\QM(\sA)$ defined above, and its finite dimensional parts $Q_d:=\QM(\sA)_d$. They fulfill the conditions from (ii), with $d^*=0$.
\end{proof}

{\linespread{1}\bibliographystyle{dpbib} 

}
\end{document}